\title{Anti-pluricanonical systems on $\bQ$-Fano 3-folds}
\author{Meng Chen}
\date{}
\address{\rm Institute of Mathematics, School of Mathematical Sciences, Fudan University,
Shanghai, 200433, China}
\email{mchen@fudan.edu.cn}
\thanks{The author was
supported by National Outstanding Young Scientist Foundation
(\#10625103) and National Natural Science Foundation of China (Key
Project: \#10731030)}
\newcommand{\bQ}{{\mathbb Q}}
\newcommand{\bP}{{\mathbb P}}
\newcommand{\roundup}[1]{\lceil{#1}\rceil}
\newcommand{\rounddown}[1]{\lfloor{#1}\rfloor}
\newcommand\OO{{\mathcal{O}}}
\newcommand\Lm{{\mathcal{L}_m}}
\newtheorem{thm}{Theorem}[section]
\newtheorem{lem}[thm]{Lemma}
\newtheorem{cor}[thm]{Corollary}
\newtheorem{prop}[thm]{Proposition}
\theoremstyle{definition}
\newtheorem{defn}[thm]{Definition}
\newtheorem{setup}[thm]{}
\newtheorem{rem}[thm]{Remark}
\theoremstyle{remark}
\newtheorem{Prbm}{\bf Problem}
\begin{document}
\begin{abstract} We investigate birationality of the anti-pluricanonical
map $\varphi_{-m}$, the rational map defined by the
anti-pluricanonical system $|-mK_X|$, on weak $\bQ$-Fano 3-folds.
\end{abstract}
\maketitle

\section{\bf Introduction}

Throughout a normal projective variety $X$ is called a {\it terminal
weak $\bQ$-Fano variety} if $X$ has at worst terminal singularities,
$X$ is $\bQ$-factorial and $-K_X$ is a nef and big $\bQ$-Cartier
divisor where $K_X$ is the canonical Weil divisor of $X$. A weak
$\bQ$-Fano variety is said to {\it be $\bQ$-Fano} if $-K_X$ is
$\bQ$-ample.

According to the Minimal Model Program (MMP), $\bQ$-Fano varieties
form a fundamental aspect in birational geometry.

Let $X$ be a terminal weak $\bQ$-Fano 3-fold. The number
$g(X):=h^0(X, \omega_X^{[-1]})-2$ is usually called the genus of
$X$. Denote by $r_X$ the Cartier index of $X$. For a positive
integer $m$, we define $|-mK_X|$ to be the natural closure of the
complete linear system $|-mK_{X^0}|$ where $X^0:=X\backslash
X_{\text{Sing}}$. Whenever $P_{-m}:=h^0(X, \OO_X^{[-m]})\geq 2$, the
rational map $\varphi_{-m}$ corresponding to $|-mK_X|$ gives rise to
the so-called ``anti-canonical geometry'' of $X$. Therefore a
natural interesting question is to find a practical number $m$ such
that $\varphi_{-m}$ is birational onto its image. Such a number $m$
(independent of $X$) exists due to the boundedness theorem which was
proved by Kawamata \cite{KA} (for Picard number one case) and by
Koll\'ar-Miyaoka-Mori-Takagi \cite{KMMT} (for general case). Even
though, it is very interesting to ask the following:

\begin{Prbm}\label{P} Can one find an optimal constant $c$ such that
$\varphi_{-c}$ is birational onto the image for all weak $\bQ$-Fano
3-folds?
\end{Prbm}

When $X$ is Gorenstein (i.e. $r_X=1$), one may take $c=5$ according
to Ando \cite{Ando}. In fact, considerable classification has been
done for Fano 3-folds (see, for example, \cite{I1}, \cite{I2},
\cite{I-P}, \cite{MM}, \cite{Mu1}, \cite{Mu2}, \cite{Nami},
\cite{Shok} and so on). When $X$ is non-Gorenstein, Problem \ref{P}
is open in general.

In contrast to $\varphi_m$ or $|mK_X|$, $\varphi_{-m}$ shows some
pathological nature. For instance, the birationality behavior of
$\varphi_{-m}$ is not birationally invariant! This often makes the
situation be more complicated.

The aim of this note is to build an effective mechanic to study the
birationality of $\varphi_{-m}$ on weak $\bQ$-Fano 3-folds. Our main
technical theorem is {\bf Theorem} \ref{mb}, which has direct
applications to various situations in Section 4 (see Theorems
\ref{3}, \ref{2}, \ref{1}). In fact, since $P_{-8}>1$ by
\cite[Theorem 1.1 (ii)]{C-C}, the rational map
$\varphi_{-8}:X\dashrightarrow \bP^N $ can be properly defined. Thus
the geometry of $X$ can be detected by studying $\varphi_{-8}$. Here
is a direct consequence of our general arguments in Section 4:

\begin{thm}\label{8} Let $X$ be a terminal weak $\bQ$-Fano 3-fold
with $r_X>1$. Then $\varphi_{-m}$ is birational onto its image under
one of the following conditions:
\begin{itemize}
\item[(i)] $\dim\overline{\varphi_{-8}(X)}=3$ and $m\geq 32$;

\item[(ii)] $\dim\overline{\varphi_{-8}(X)}=2$ and $m\geq
\text{max}\{2r_X+16, 48\}$;

\item[(iii)] $\dim\overline{\varphi_{-8}(X)}=1$ and
$$m\geq \begin{cases} 16, & r_X=2;\\
20,& r_X=3;\\
3r_X+10,& r_X\geq 4.
\end{cases}$$
\end{itemize}
\end{thm}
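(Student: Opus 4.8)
The plan is to read off Theorem \ref{8} from the general criteria of Section 4 once the single numerical input from \cite{C-C} is in place; the substantive work lies in Theorems \ref{3}, \ref{2}, \ref{1} (themselves consequences of the technical Theorem \ref{mb}), and Theorem \ref{8} is the specialization $n_0=8$. First, since $P_{-8}\geq 2$ by \cite[Theorem 1.1(ii)]{C-C}, the rational map $\varphi_{-8}\colon X\dashrightarrow\bP^N$ is honestly defined, so one sets $d:=\dim\overline{\varphi_{-8}(X)}\in\{1,2,3\}$ and argues case by case. In every case I would first pass to a resolution $\pi\colon X'\to X$ on which the movable part $M_8$ of the pulled-back system $|{-8K_X}|$ is base-point free, recording that $\pi^*(-K_X)$ is nef and big and that $K_{X'}-\pi^*K_X$ is effective and $\pi$-exceptional since $X$ is terminal. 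The common engine --- what Theorem \ref{mb} isolates --- is then: restrict $\pi^*(-K_X)$ to a general irreducible element of the map attached to $M_8$, bound the volume of that restriction from below, and lift sections back to $X$ via Kawamata--Viehweg vanishing applied to a divisor of the form $K_{X'}+\lceil(\text{nef and big})\rceil$; the three cases differ only in the dimension ($0$, $1$, $2$) of that element.

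For $d=3$ the map $\varphi_{-8}$ is generically finite and $M_8$ is big; the general element is a point, and separating a general pair of points costs a bounded number of copies of $M_8$ (four in the worst case) together with the rounding and canonical correction absorbed by the vanishing theorem. This is Theorem \ref{3}, and with $n_0=8$ it gives birationality of $\varphi_{-m}$ for $m\geq 4\cdot 8=32$, i.e.\ (i). For $d=2$, after Stein factorization $X'$ maps onto a surface whose general fiber is a smooth curve $C$; one proves birationality fiberwise on $C$ (where a bound of $2g(C)+1$ type on the restricted degree is what is needed) and spreads it out, the dependence on $r_X$ coming from clearing the denominators contributed by the non-Gorenstein points of $X$. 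This is Theorem \ref{2} with $n_0=8$, yielding $m\geq\max\{2r_X+2\cdot 8,\,6\cdot 8\}=\max\{2r_X+16,48\}$, i.e.\ (ii).

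The case $d=1$ carries the real content and is where I expect the main obstacle. Here $\varphi_{-8}$ induces, after Stein factorization, a fibration $f\colon X'\to B$; since $X$ is rationally connected $B\cong\bP^1$, and the general fiber $S$ is a smooth projective surface. One must then control the pair $(S,(\pi^*(-K_X))|_S)$: the self-intersection of the nef $\bQ$-divisor $(\pi^*(-K_X))|_S$, the way it dominates an ample or nef integral divisor on $S$, and how its denominators interact with $r_X$, before applying a Reider-type birationality statement on the surface $S$ and promoting it up the fibration. This produces a generic bound of the shape $3r_X+10$, which already gives $16$ for $r_X=2$ and $3r_X+10$ for $r_X\geq 4$; but when $r_X=3$ a borderline surface configuration --- one in which $(\pi^*(-K_X))|_S$ has too small a degree --- obstructs $m=19$ and only $m\geq 20$ can be secured. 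Isolating and disposing of that exceptional configuration is the delicate point, and it is precisely what Theorem \ref{1} handles, completing (iii).
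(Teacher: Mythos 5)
Your reduction of parts (i) and (ii) is essentially the paper's own proof: Theorem \ref{8} is obtained by running the Section 2--3 machinery with $m_0=8$ (any $m_0$ with $P_{-m_0}\geq 2$ is allowed, so minimality of $\hat m_0$ is not needed), and then Theorems \ref{3} and \ref{2} give $m\geq 4\cdot 8=32$ and $m\geq\max\{2\cdot 8+2r_X,\,6\cdot 8\}$ respectively. Those two cases are fine.

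Case (iii) is where your proposal has a genuine gap. The hypothesis of Theorem \ref{1} is not just ``$\dim\overline{\varphi_{-8}(X)}=1$'': it also requires an integer $m_1$ such that $|-m_1K_X|$ is \emph{not} composed with a pencil of surfaces, and producing such an $m_1$ is the substantive input you omit. When $|-8K_X|$ is composed with a pencil, the restriction $M_{-8}|_S$ to a general fiber $S$ is trivial, so there is no curve family on $S$ coming from $|-8K_X|$ at all; the paper manufactures one by taking $|G|=|M_{-m_1}|_S|$, and the existence of a usable $m_1$ is Proposition \ref{np}, proved by Riemann--Roch/anti-plurigenus estimates (using Serre duality for the Cohen--Macaulay sheaves $\omega_X^{[k]}$ and bounding the terms $A$, $C(t)$) together with Corollary \ref{non-pencil}. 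Your sketch instead proposes to control $(\pi^*(-K_X))|_S$ directly and invoke a Reider-type theorem on $S$; that is the mechanism of the paper's \emph{last} theorem in Section 4 (which yields $\hat m_0+3r_X-1$, a non-stable bound), not of Theorem \ref{1}, and as presented it cannot produce the stated constants. In particular your explanation of the value $20$ at $r_X=3$ --- a ``borderline surface configuration obstructing $m=19$'' --- is not the actual reason: the constants come from $m\geq \hat m_0+m_1+2r_X$ with $\hat m_0\leq 8$ and $m_1=4,6,r_X+2$ for $r_X=2,3,\geq 4$, and the anomaly at $r_X=3$ occurs because the plurigenus estimate only certifies the non-pencil property for $m_1=2r_X=6$ there (the bound $P_{-(r+2)}>(r+2)r(-K_X)^3+1$ needs $r\geq 4$), not because of any exceptional geometry on $S$. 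To close the gap you must prove (or cite) the non-pencil statement for these specific $m_1$ and then apply Theorem \ref{1} with $\zeta\geq 1/r_X$, $\varrho=1/m_1$, verifying Assumption \ref{asum} via Propositions \ref{a1} and \ref{aa2}.
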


\begin{rem} By virtue of the boundedness theorem for $\bQ$-Fano 3-folds, $r_X$ is upper bounded.
Therefore Theorem \ref{8} has actually obtained a universal constant
$c$ with $\varphi_{-c}$ birational. However such a constant $c$
might still be a little bit far from optimal.
\end{rem}

Restricting our interest to $\bQ$-Fano 3-folds with Picard number
one, what we can prove is slightly favorable. Note that $\bQ$-Fano
3-folds with Picard number one form an important class since the
3-dimensional MMP (see, for instance, \cite{KMM}, \cite{K-M},
\cite{Reid83} and \cite{Sho}) says that any terminal object (after
running MMP) of a given smooth projective variety is either a Mori
fiber space (whose fibers are $\bQ$-Fano with Picard number one) or
a minimal 3-fold. Furthermore, many important works relating to the
classification of non-Gorenstein $\bQ$-Fano 3-folds with Picard
number one have been achieved so far (see \cite{Alex94},
\cite{Prok}, \cite{Suzuki}, \cite{TKG}, \cite{ZQ} and others for a
sample of references). Here we would especially like to mention a
conjecture of Reid: a general member $D\in |-K_X|$ is a K3 surface
for a ``general'' $\bQ$-Fano 3-fold of Picard number one. The
conjecture in Fano case was proved by Reid and Shokurov \cite{Sho2},
although it is still open in general.

In order to make a concise statement of our result, we say that a
$\bQ$-Fano 3-fold $X$ is {\it standard} if
\begin{itemize}
\item[$\cdot$]
the Picard number $\rho(X)=1$;

\item[$\cdot$] $g(X)\geq 0$, a general member $D\in |-K_X|$ is irreducible and
reduced, and a smooth birational model $S$ of $D$ has non-negative
Kodaira dimension, i.e. $\kappa(S)\geq 0$.
\end{itemize}

Roughly speaking, if Reid's conjecture holds true, then a
``general'' terminal $\bQ$-Fano 3-fold with Picard number one is
standard.

Here is our another result:

\begin{thm}\label{standard} Let $X$ be a standard $\bQ$-Fano 3-fold. Then $\varphi_{-m}$ is birational for all $m\geq 6$.
\end{thm}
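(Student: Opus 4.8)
The plan is to exploit the defining properties of a standard $\bQ$-Fano 3-fold: $\rho(X)=1$ forces $-K_X$ to be ample, so a general member $D\in|-K_X|$ is an irreducible, reduced (anti-canonical) surface, and by adjunction, on a smooth model $S$ of $D$ we have that $\sigma^*(-K_X)|_D$ pulls back to (roughly) the canonical divisor of $S$ up to the correction coming from the singularities of $X$ lying on $D$. Since $\kappa(S)\geq 0$, the surface $S$ is of general type, or properly elliptic, or a K3/abelian/bielliptic type surface, and in every case the pluricanonical (or "anti-canonical restricted") behavior on $S$ is controlled by classical surface theory. So the first step is to take a log resolution $\pi: X'\to X$ on which $|-K_{X'}|$ (the movable part of $\pi^*(-K_X)$) is base point free, let $S'$ be the strict transform of $D$, and set $M=-K_{X'}|_{S'}$; then analyze the linear system $|mM|$ on $S'$ for $m\geq 6$ using the Kawamata–Viehweg vanishing theorem to lift sections, exactly in the spirit of Theorem \ref{mb} (the main technical theorem) specialized to this favorable situation.

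The key steps, in order, would be: (1) Reduce birationality of $\varphi_{-m}$ to separating general points of $X$; since $D$ moves in a base-point-free (after resolution) system covering $X$, it suffices to separate two general points on a general such $D$, plus separate distinct general members $D$. (2) Show $\varphi_{-m}$ separates different members $D$: because $\rho(X)=1$ and $-K_X$ is ample with $h^0(-K_X)=g(X)+2\geq 2$, the system $|-K_X|$ already (after possibly one blow-up) gives a map whose general fiber is a point, so for $m\geq 1$ the members are separated — this is the easy part. (3) The crux: restrict to a general $D$ and use Kawamata–Viehweg vanishing on $X'$ to show the restriction map $H^0(X', -mK_{X'})\to H^0(S', mM)$ is surjective (or surjective onto a large enough subsystem), reducing to showing $|mM|$ gives a birational map on $S'$, equivalently separates two general points of $S'$. (4) Invoke surface theory on $S$: since $\kappa(S)\geq 0$, one has $M\geq K_{S'}$ modulo effective/fractional corrections, and for $m\geq 6$ one gets $mM\geq K_{S'}+($ample and big enough $)$, so Reider-type or direct Riemann–Roch arguments on the surface yield the needed separation of general points; the numerology "$m\geq 6$" should come out of requiring roughly $(m-1)M$ to dominate $K_{S'}$ plus a very ample increment, and $6=1+5$ or $6 = 2\cdot 3$ type bounds from surface pluricanonical maps (recall the optimal bound for surfaces of general type involves $5K_S$, and the extra $+1$ accounts for the adjunction shift from $M$ to $K_{S'}$).

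The main obstacle will be step (3)–(4): controlling the difference between $M=-K_{X'}|_{S'}$ and $K_{S'}$ precisely enough. On a smooth model the adjunction gives $K_{S'} = (K_{X'}+S')|_{S'} = (\pi^*(K_X) + (\text{exceptional}) + S')|_{S'}$, and $S'\sim \pi^*(-K_X) - (\text{exceptional, the fixed part})$, so $K_{S'}$ differs from $M = \pi^*(-K_X)|_{S'} - (\text{fixed part})|_{S'}$ by a combination of exceptional divisors restricted to $S'$ — these can be negative, which is what makes anti-canonical geometry pathological (as the introduction warns). The trick, as in Theorem \ref{mb}, is to choose the resolution and the rational coefficients so that the "bad" exceptional contribution is absorbed into the round-down in Kawamata–Viehweg vanishing: write $mM = K_{S'} + \roundup{(m-1)M + (\text{small ample }\bQ\text{-divisor})} + (\text{nef})$ and apply vanishing to conclude surjectivity of restriction and base-point-freeness modulo two general points. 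Verifying that $m\geq 6$ suffices — and not merely $m\geq 7$ or $m\geq 8$ — will require the sharp surface-theoretic input ($\kappa(S)\geq 0$ is exactly what lets us save the two units over the general boundedness bound, since the canonical bundle of $S$ is effective rather than needing to be manufactured from scratch), together with a careful treatment of the elliptic and Kodaira-dimension-zero cases where $K_S$ contributes nothing and one must squeeze birationality purely out of the "big" part of $M$ using that $-K_X$ is big.
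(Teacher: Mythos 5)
Your proposal does not reproduce a working argument; two essential ingredients of the actual proof are missing, and the step you identify as the crux is powered by the wrong mechanism. First, you never rule out the possibility that $|-K_X|$ is composed with a pencil of surfaces. Your step (2) asserts that, because $\rho(X)=1$ and $h^0(-K_X)\geq 2$, the system $|-K_X|$ ``gives a map whose general fiber is a point''; this is unjustified (with $h^0=2$ the image could be a curve), and if $|-K_X|$ were composed with a pencil of surfaces the machinery only yields birationality for $m\geq \hat m_0+m_1+2r_X$ (Theorem \ref{1}), a bound depending on $r_X$, so $m\geq 6$ would not follow. The paper excludes this case by quoting Alexeev's result \cite[Theorem 2.18]{Alex94} that for $\rho(X)=1$ the system $|-K_X|$ has no fixed part and is not composed with a pencil of surfaces; some such input is indispensable and is absent from your outline.

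Second, your steps (3)--(4) rely on comparing $M=\pi^*(-K_X)|_{S'}$ with $K_{S'}$ and then invoking Reider-type or pluricanonical surface bounds (``$6=1+5$''). This cannot work: in the principal case predicted by Reid's conjecture $S$ is a K3, so $K_{S'}$ contributes nothing, and $M$ is only a nef and big $\bQ$-divisor whose self-intersection can be as small as $1/r_X$ (Lemma \ref{L^2}); Reider applied to $mM$ would need $(mM)^2\geq 10$, i.e.\ $m$ growing like $\sqrt{10\,r_X}$, and $mM$ is not even an integral divisor, so no uniform constant $6$ comes out this way. In the paper the hypothesis $\kappa(S)\geq 0$ is used for something entirely different and much weaker: it guarantees that $S$ is not fibred by rational curves, hence the generic irreducible element $C$ of $|{M_{-1}}|_S|$ has $g(C)>0$. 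That single fact feeds the curve-level numerics: when $\dim\overline{\varphi_{-1}(X)}=3$, Clifford's theorem gives $\zeta\geq 2$ and Theorem \ref{3} applies; when the image is a surface, the inductive nonvanishing argument of Theorem \ref{2}(1) (via Corollary \ref{nonv}) bootstraps $\zeta\geq \frac{1}{2\hat m_0}$, and requiring $\varepsilon>2$ yields exactly $m\geq 6\hat m_0=6$. So the constant $6$ comes from the restriction-to-curves mechanism of Theorem \ref{mb} with $g(C)>0$, not from surface pluricanonical theory, and your proposed route would at best give a bound depending on $r_X$.
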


As mentioned earlier, the linear system $|-mK_X|$ shows a lot of
pathological properties for $\bQ$-Fano 3-folds. To overcome the
obvious difficulties, we will extend the $\bQ$-divisor method that
was used to treat 3-folds of general type. Thanks to the
self-optimization function of our mechanic and to the boundedness
theorem for $\bQ$-Fano 3-folds, we have managed to prove the
effective birationality of $\varphi_{-m}$. Unfortunately, this
method is less effective for those $\bQ$-Fano 3-folds on which, for
some integer $l>0$, $|-lK_X|$ is composed of birationally ruled
surfaces, though that kind of ``bad'' $\bQ$-Fano 3-folds can be
regarded as ``minority''.

This paper is organized as follows. Section 2 mainly sets up the
rational map $\varphi_{-m_0}$ and then reduces the problem to that
on a nonsingular model. In Section 3, we prove our key theorem on
the birationality of $\varphi_{-m}$. Various concrete applications
will be presented in Section 4. In Appendix, we prove $P_{-m}>0$ for
all $m\geq 6$ on any $\bQ$-Fano 3-folds, which is applied to prove
our main theorems.

Sometimes, we use the notation $\OO_X(mK_X)$ to denote the reflexive
sheaf $\omega_X^{[m]}$. The symbol $\sim$ always means linear
equivalence while $\equiv$ represents numerical equivalence.
\bigskip

I would like to thank Jungkai A. Chen, H\'el\`ene Esnault,
Christopher D. Hacon, Yujiro Kawamata, Janos Koll\'ar and De-Qi
Zhang either for stimulating discussions or for their generous
helps.

\section{\bf Preliminaries and the main reduction}

\begin{setup}{\bf Convention.}
Let $X$ be a terminal weak $\bQ$-Fano 3-fold. Pick a canonical
divisor $K_X$ on $X$. Denote by $r_X$ the Cartier index of $X$,
which is nothing but the minimal positive integer with $r_XK_X$
being Cartier. For any positive integer $m$, the number
$P_{-m}:=h^0(X,\OO_X(-mK_X))$ is called the {\it anti-plurigenus} of
$X$. By convention, $g:=h^0(X,\OO_X(-K_X))-2$ is referred to as the
{\it genus} of $X$. Clearly, since $-K_X$ is nef and big, Kawamata's
vanishing theorem \cite[Theorem 1-2-5]{KMM} implies
$$h^i(\OO_X)=h^i(X, K_X-K_X)=0$$
for all $i>0$. Thus $\chi(\OO_X)=1$. Furthermore, the positive
rational number $-K_X^3$ is called the {\it anti-canonical volume}.
\end{setup}

By \cite[Theorem 1.1 (ii)]{C-C}, one has $P_{-8}\geq 2$. Thus, for a
given weak $\bQ$-Fano 3-fold $X$, there exists a positive integer
$m_0\leq 8$ such that $P_{-m_0}\geq 2$. We shall begin from studying
the geometry induced by the rational map $\varphi_{-m_0}$.

\begin{setup}\label{set}{\bf Set up for $\varphi_{-m_0}$.}
We study the $m_0$-th anti-canonical map of $X$:
$$X\overset{\varphi_{-m_0}}{\dashrightarrow} \bP^{P_{-m_0}-1}$$ which is
not necessarily well-defined everywhere. First of all we fix an
effective Weil divisor $\hat{K}_0\sim -m_0K_X$. By Hironaka's big
theorem, we can take successive blow-ups $\pi: Y\rightarrow X$ such
that:
\begin{itemize}
\item [(i)] $Y$ is nonsingular projective;
\item [(ii)] the movable part $|M_{-m_0}|$ of the linear system
$|\rounddown{\pi^*(-m_0K_X)}|$ is base point free and, consequently,
the rational map $\gamma:=\varphi_{-m_0}\circ \pi$ is a morphism;
\item [(iii)] the support of the
union of $\hat{K}_0$ and the exceptional divisors of $\pi$ is of
simple normal crossings (to secure the usage of vanishing theorems).
\end{itemize}
Let $Y\overset{f}\longrightarrow \Gamma\overset{s}\longrightarrow Z$
be the Stein factorization of $\gamma$ with $Z:=\gamma(Y)\subset
\bP^{P_{-m_0}-1}$. In summary, we have the following commutative
diagram:\medskip

\begin{picture}(50,80) \put(100,0){$X$} \put(100,60){$Y$}
\put(170,0){$Z$} \put(170,60){$\Gamma$}
\put(112,65){\vector(1,0){53}} \put(106,55){\vector(0,-1){41}}
\put(175,55){\vector(0,-1){43}} \put(114,58){\vector(1,-1){49}}
\multiput(112,2.6)(5,0){11}{-} \put(162,5){\vector(1,0){4}}
\put(133,70){$f$} \put(180,30){$s$} \put(95,30){$\pi$}
\put(130,-5){$\varphi_{-m_0}$}\put(136,40){$\gamma$}
\end{picture}
\bigskip

Recall that
$$\pi^*(K_X):=K_{Y}-\frac{1}{r_X}E_{\pi}$$ with $E_{\pi}$ effective and
$\bQ$-Cartier since $X$ has at worst terminal singularities.
Equivalently, $-K_Y=\pi^*(-K_X)-\frac{1}{r_X}E_{\pi}$.

{\bf Case $(f_{\text{np}})$.} If $\dim(\Gamma)\geq 2$, a general
member $S$ of $|M_{-m_0}|$ is a nonsingular projective surface by
Bertini's theorem. We say that $|-m_0K_X|$ {\it is not composed with
a pencil of surfaces}.

{\bf Case $(f_\text{p})$.} If $\dim(\Gamma)=1$, then $\Gamma\cong
\bP^1$ since $g(\Gamma)\leq q(Y)=q(X):=h^1(\OO_X)=0$. Furthermore, a
general fiber $S$ of $f$ is an irreducible smooth projective surface
by Bertini's theorem again. We may write
$$M_{-m_0}=\underset{i=1}{\overset{a_{m_0}}\sum}S_i\equiv
a_{m_0}S$$ where $S_i$ is a smooth fiber of $f$ for all $i$ and
$a_{m_0}=P_{-m_0}-1$. In this case, we say that $|-m_0K_X|$ {\it is
composed with a (rational) pencil of surfaces}.

Define
$$\iota:=\begin{cases} 1, & \text{Case\ } (f_{\text{np}})\\
a_{m_0}, & \text{Case\ } (f_{\text{p}}).
\end{cases}$$
\end{setup}
Clearly, at any case, $M_{-m_0}\equiv \iota S$ with $\iota\geq 1$.

\begin{defn} For both Case $(f_{\text{np}})$ and Case $(f_{\text{p}})$, we call $S$
{\it a generic irreducible element of $|M_{-m_0}|$}.
\end{defn}

\begin{lem}\label{L^2} Keep the same notation as above.
The number $$r_X(\pi^*(-K_X)^2\cdot S)_Y$$ is a positive integer.
\end{lem}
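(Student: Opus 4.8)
The plan is to reduce the statement to a divisibility fact about the Cartier divisor $-r_XK_X$ on $X$ together with the projection formula. First I would note that since $-K_X$ is $\bQ$-Cartier with Cartier index $r_X$, the divisor $-r_XK_X$ is a genuine Cartier divisor on $X$, and likewise $r_X\pi^*(-K_X)$ is an honest Cartier divisor on $Y$ (it equals $\pi^*(-r_XK_X)$ as a line bundle, being the pullback of a Cartier divisor). Hence the intersection number $(r_X\pi^*(-K_X))^2\cdot S$ is an integer for any surface $S$ on $Y$, but this only gives $r_X^2(\pi^*(-K_X)^2\cdot S)_Y\in\ZZ$, which is weaker than what we want; so a little more care is needed.

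The key step is to rewrite the number using the birational morphism $\pi$ and the defining relation $-K_Y=\pi^*(-K_X)-\frac{1}{r_X}E_\pi$. I would instead compute $(\pi^*(-K_X)^2\cdot S)_Y=(\pi^*(-K_X)^2\cdot \pi^*(-m_0K_X)')_Y/\iota$-type expressions only if helpful; more directly, write $\pi^*(-K_X)\cdot S$ as a $1$-cycle on $Y$ and push forward: by the projection formula, $(\pi^*(-K_X)^2\cdot S)_Y=(-K_X\cdot \pi_*(\pi^*(-K_X)\cdot S))_X$. Now $\pi^*(-K_X)\cdot S$ pushes forward to an effective $1$-cycle $C$ on $X$, and $r_X(-K_X\cdot C)_X$ is an integer precisely because $-r_XK_X$ is Cartier and $C$ is an integral $1$-cycle. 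That already yields the claim, provided the pushforward $C=\pi_*(\pi^*(-K_X)\cdot S)$ is an integral (integer-coefficient) cycle — which it is, since $S$ is an honest divisor on $Y$ and $\pi^*(-K_X)$, while only $\bQ$-Cartier, has the property that $r_X\pi^*(-K_X)$ is Cartier, so $r_X(\pi^*(-K_X)\cdot S)$ is an integral $1$-cycle on $Y$ and its pushforward is integral; intersecting with the Cartier divisor $-K_X$ then lands in $\frac{1}{r_X}\ZZ$ after dividing back, giving $r_X(\pi^*(-K_X)^2\cdot S)_Y\in\ZZ$.

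Concretely the cleanest route is: set $L:=r_X\pi^*(-K_X)$, a Cartier divisor on $Y$; then $r_X(\pi^*(-K_X)^2\cdot S)_Y=\frac{1}{r_X}(L^2\cdot S)_Y$, so I must show $r_X\mid (L^2\cdot S)_Y$. Using $L=\pi^*(-r_XK_X)$ and the projection formula, $(L^2\cdot S)_Y=(\pi^*(-r_XK_X)\cdot(L\cdot S))_Y=(-r_XK_X\cdot \pi_*(L\cdot S))_X$, and $\pi_*(L\cdot S)$ is an integral $1$-cycle on $X$ while $-r_XK_X$ is Cartier, so this intersection number is an integer divisible by $r_X$ — the extra factor $r_X$ coming from the coefficient in $-r_XK_X$. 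Positivity is then immediate: $\pi^*(-K_X)$ is nef and big, $S$ is an irreducible surface dominating $Y$ onto a variety of positive dimension, so $\pi^*(-K_X)^2\cdot S>0$ (either because $S$ moves in the base-point-free system $|M_{-m_0}|$ and $\pi^*(-K_X)$ is big, or by the Hodge index / Kawamata's nef-and-big positivity applied on the surface $S$). The main obstacle is the bookkeeping of which divisors are Cartier versus merely $\bQ$-Cartier and keeping the factors of $r_X$ straight; once $L=\pi^*(-r_XK_X)$ is introduced and the projection formula is invoked, the divisibility falls out and positivity is standard.
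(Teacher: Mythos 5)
There is a genuine gap at the decisive step. Your reduction to showing $r_X\mid (L^2\cdot S)_Y$ with $L=\pi^*(-r_XK_X)$ is fine, as is the projection-formula identity $(L^2\cdot S)_Y=(-r_XK_X\cdot \pi_*(L\cdot S))_X$ with $\pi_*(L\cdot S)$ an integral $1$-cycle. But the final assertion --- that a Cartier divisor of the form $-r_XK_X$ intersected with an integral $1$-cycle is automatically divisible by $r_X$, ``the extra factor $r_X$ coming from the coefficient'' --- is false: it would amount to saying $(-K_X\cdot C)\in\ZZ$ for every integral curve $C$, which fails precisely because $K_X$ is only $\bQ$-Cartier. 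For instance, on $X=\bP(1,1,1,2)$ (a terminal $\bQ$-Fano $3$-fold with $r_X=2$), the curve $C=\{x_0=x_1=0\}$ through the $\frac12(1,1,1)$ point has $(-K_X\cdot C)=\frac52$, so $(-2K_X\cdot C)=5$ is an integer not divisible by $2$. The same defect undermines your first variant: $\pi_*(\pi^*(-K_X)\cdot S)$ is only a $\frac{1}{r_X}\ZZ$-cycle, and once you clear denominators and intersect with the non-Cartier divisor $-K_X$ you land back at $r_X^2(\pi^*(-K_X)^2\cdot S)\in\ZZ$, the weak statement you had already dismissed. So the divisibility you need is exactly the content of the lemma and cannot be extracted from Cartierness of $r_XK_X$ alone; you would have to prove that the particular curve $\pi_*(L\cdot S)$ has integral $-K_X$-degree, which is not addressed.

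What makes the lemma work --- and what the paper's proof uses --- is that $-K_X$ is not an arbitrary $\bQ$-Cartier Weil divisor: it has a Cartier ``proxy'' on a resolution. Choose $\pi$ to dominate a resolution $\tau:\hat W\to X$ and write $K_{\hat W}=\tau^*(K_X)+\Delta_\tau$, where $\Delta_\tau$ is exceptional and lies over the finitely many isolated terminal singular points. By the projection formula, $(\pi^*(-K_X)^2\cdot S)_Y=(\tau^*(-K_X)^2\cdot S_1)_{\hat W}$ for $S_1$ the pushed-down surface, and since $\tau_*(\Delta_\tau\cdot S_1)=0$ (the exceptional divisors map to points), one copy of $\tau^*(-K_X)$ may be replaced by $-K_{\hat W}$:
$$r_X(\tau^*(-K_X)^2\cdot S_1)_{\hat W}=(\tau^*(-r_XK_X)\cdot(-K_{\hat W})\cdot S_1)_{\hat W},$$
which is an intersection of two genuine Cartier divisors on the smooth $\hat W$ with an integral surface, hence an integer. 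Your positivity argument (nef and big restriction to a moving $S$) is fine, but the integrality requires this extra input --- the canonical class and the isolatedness of terminal $3$-fold singularities --- which your purely formal argument does not supply.
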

\begin{proof} Note that the number $(\pi^*(-K_X)^2\cdot S)$ is
positive since $(\pi^*(-K_X)^2\cdot S)_Y=(\pi^*(-K_X)|_S)_S^2$ and
$\pi^*(-K_X)|_S$ is nef and big on $S$.  It is also independent of
the choice of $\pi$ according to the projection formula of the
intersection theory. So we may choose such a modification $\pi$ that
dominates a resolution of singularities $\tau: \hat{W}\rightarrow
X$. Then we see $(\pi^*(-K_X)^2\cdot S)_Y=(\tau^*(-K_X)^2\cdot
{S_1})_{\hat{W}}$ where $S_1=\delta_*(S)$ is a divisor on $\hat{W}$
and $\delta:Y\rightarrow \hat{W}$ is a birational morphism. Note,
however, $S_1$ is a generic element in an algebraic family though it
is not necessarily nonsingular.

We may write $K_{\hat{W}}=\tau^*(K_X)+\Delta_{\tau}$ where
$\Delta_{\tau}$ is an exceptional effective $\bQ$-divisor over those
isolated terminal singularities on $X$. Now, by intersection theory,
we have $(r\tau^*(-K_X)\cdot \tau^*(-K_X)\cdot
S_1)_{\hat{W}}=(r\tau^*(-K_X)\cdot (-K_Z)\cdot S_1)_{\hat{W}}$ is an
integer.
\end{proof}

The proof for the next lemma was suggested by C. D. Hacon.

\begin{lem}\label{Hn} Let $W$ be a normal projective variety on which there is
an integral Weil $\bQ$-Cartier divisor $D$. Let $h: V\longrightarrow
W$ be any resolution of singularities. Assume that $E$ is an
effective exceptional $\bQ$-divisor on $V$ with $h^*(D)+E$ a Cartier
divisor on $V$. Then
$$h_*\OO_V(h^*(D)+E)=\OO_W(D)$$
where $\OO_W(D)$ is the reflexive sheaf corresponding to the Weil
divisor $D$.
\end{lem}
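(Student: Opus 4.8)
The plan is to reduce the statement to the fact that the reflexive pushforward of the structure sheaf of a resolution is the structure sheaf of the base, together with the projection formula for reflexive sheaves. First I would note that since $h^*(D)+E$ is Cartier, $\OO_V(h^*(D)+E)$ is a line bundle on $V$, and because $E$ is effective there is an obvious inclusion $\OO_V(h^*(D)) \hookrightarrow \OO_V(h^*(D)+E)$. Conversely, for the reverse containment of the pushforwards, the key point is that any local section of $h_*\OO_V(h^*(D)+E)$ is a rational function $\varphi$ on $W$ whose pullback has poles bounded by $h^*(D)+E$; since $E$ is $h$-exceptional and effective, pushing forward kills the contribution of $E$, and one is left with $\mathrm{div}(\varphi) + D \geq 0$ as Weil divisors on $W$. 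This is exactly the negativity/exceptionality argument: if $h^*(\mathrm{div}(\varphi)+D) + E \geq 0$ on $V$ and $E$ is effective exceptional, then $\mathrm{div}(\varphi)+D \geq 0$ on $W$ away from codimension $\geq 2$, hence everywhere as Weil divisors. Therefore $\varphi \in \Gamma(\OO_W(D))$.

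Concretely, I would argue locally on $W$, so assume $W$ affine. One inclusion $\OO_W(D) \subseteq h_*\OO_V(h^*(D)+E)$ follows since a section of $\OO_W(D)$ pulls back to a section of $\OO_V(h^*(D)) \subseteq \OO_V(h^*(D)+E)$. For the other inclusion, take $\varphi \in \Gamma(V, \OO_V(h^*(D)+E))$, viewed as a rational function; then $h^*(\mathrm{div}_W(\varphi) + D) + E = \mathrm{div}_V(\varphi) + h^*(D) + E \geq 0$. Restricting to the open set $U \subseteq W$ over which $h$ is an isomorphism (whose complement has codimension $\geq 2$ in $W$ since $h$ is a resolution and $W$ is normal), and using that $E$ maps into $W \setminus U$, we get $\mathrm{div}_U(\varphi) + D|_U \geq 0$. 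Since $\OO_W(D)$ is reflexive and $W$ is normal, a rational function regular in codimension one with $\mathrm{div} + D \geq 0$ there extends to a section of $\OO_W(D)$ on all of $W$; thus $\varphi \in \Gamma(W, \OO_W(D))$. Combining the two inclusions gives the equality of sheaves.

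The main obstacle, and the point that requires care rather than being purely formal, is handling the fact that $D$ is only $\bQ$-Cartier and not Cartier: one must make sense of $h^*(D)$ as a $\bQ$-divisor and verify that the identity $\mathrm{div}_V(\varphi) + h^*(D) + E = h^*(\mathrm{div}_W(\varphi)+D) + E$ holds with the correct (non-integral) pullback, and that adding the integral Cartier divisor $h^*(D)+E$ to the principal divisor of $\varphi$ indeed computes the divisor of $\varphi$ as a section of that line bundle. The exceptionality of $E$ is used crucially here: it guarantees that $h_*$ does not see $E$, so that the two sides of the asserted equality agree after pushing forward. Once these bookkeeping issues about $\bQ$-divisor pullbacks and the codimension-two complement are pinned down, the equality $h_*\OO_V(h^*(D)+E) = \OO_W(D)$ follows immediately from the reflexivity of $\OO_W(D)$ and the normality of $W$.
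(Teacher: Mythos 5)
Your argument is correct and is essentially the paper's own proof: both inclusions are verified by viewing sections as rational functions, using that $E$ is $h$-exceptional so effectivity need only be tested over an open subset of the normal variety $W$ whose complement has codimension $\geq 2$ (the paper phrases this via valuation theory and quotes Nakayama's lemma $h_*\OO_V(\rounddown{h^*(D)})=\OO_W(D)$ for the easy inclusion, which you instead check directly by pulling back sections). The only cosmetic fix is to write $\OO_V(\rounddown{h^*(D)})$ rather than $\OO_V(h^*(D))$, since $h^*(D)$ need not be an integral divisor.
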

\begin{proof} By \cite[Lemma 2.11]{Naka},
$h_*\OO_V(\rounddown{h^*(D)})=\OO_W(D).$ Consequently, one has:
$$\OO_W(D)=h_*\OO_V(\rounddown{h^*(D)})\hookrightarrow
h_*\OO_V(h^*(D)+E) \overset{j}\longrightarrow \OO_W(D)$$ where, for
any section $g\in h_*\OO_V(h^*(D)+E)(U)$ over a Zariski open subset
$U\subset W$, $g$ comes from a rational function $\tilde{g}$ on
$h^{-1}(U)$ with
$$((\tilde{g})+(h^*(D)+E))|_{h^{-1}(U)}\geq 0.$$
However, since $\text{Codim} (W_{\text{Sing}})\geq 2$ and by the
valuation theory, one has
$$((g)+D)|_{U}=((g)+D)|_{U\cap W_{\text{Sing}}}=
((\tilde{g})+h^*(D)+E)|_{h^{-1}(U\cap W_{\text{Sing}})}\geq 0.$$
Thus $g$ can be viewed as a section $j(g)$ in $\OO_W(D)(U)$ and $j$
is defined in this way.

Since $j$ is identical over $W^0:=W\backslash W_{\text{Sing}}$ and
$h_*\OO_V(h^*(D)+E)$ is torsion free, $j$ is an inclusion. On the
other hand, pick a rational function $\xi$ on an open set $U$ of $X$
such that $(\xi)+D\geq 0$. Then $(\xi\circ h)+h^*(D)\geq 0$ and so
$(\xi\circ h)+h^*(D)+E\geq 0$. So $\xi$ belongs to
$h_*\OO_V(h^*D+E)(U)$ and $j$ is surjective. Hence $j$ is the
identity.
\end{proof}

\begin{setup}\label{red}{\bf The main reduction.} Let
$\pi:Y\longrightarrow X$ be as in \ref{set}. Let $m>0$ be an
integer. One has:
$$\begin{array}{lll}
&& K_Y+\roundup{(m+1)\pi^*(-K_X)}\\
&=& \pi^*(K_X)+\frac{1}{r}E_{\pi}+\pi^*(-(m+1)K_X)+E_{m+1}\\
&=& \pi^*(-mK_X)+(\frac{1}{r}E_{\pi}+E_{m+1})
\end{array}$$
where $E_{m+1}$ is an effective $\bQ$-divisor on $Y$. Lemma \ref{Hn}
implies $$\pi_*\OO_Y(K_Y+\roundup{(m+1)\pi^*(-K_X)})=\OO_X(-mK_X).$$
Hence $\varphi_{-m}$ is birational if and only if
$\Phi_{|K_Y+\roundup{(m+1)\pi^*(-K_X)}|}$ is birational.

Noting that
$$\begin{array}{lll}
H^0(\OO_X(-mK_X))&\cong& H^0(\OO_Y(\rounddown{-m\pi^*(K_X)})\\
&\cong &H^0(\OO_Y(K_Y+\roundup{(m+1)\pi^*(-K_X)}), \end{array}$$ we
denote by $|M_{-m}|$ the movable part of
$|\rounddown{-m\pi^*(K_X)}|$. Clearly, one has the equality:
$$-m\pi^*(K_X)=M_{-m}+F_{m} \eqno{(2.1)}$$
where $F_m$ is an effective $\bQ$-divisor.
\end{setup}

Another direct consequence is that we may write:
$$K_Y+\roundup{(m+1)\pi^*(-K_X)}\sim M_{-m}+N_{-m} \eqno{(2.2)}$$ where $N_{-m}$
is the fixed part.

\begin{cor}\label{non-pencil} Let $X$ be a terminal weak $\bQ$-Fano 3-fold.
If $$P_{-m_0}>m_0r_X(-K_X)^3+1$$ for some integer $m_0$, then
$|-m_0K_X|$ is not composed with a pencil.
\end{cor}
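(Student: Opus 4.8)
The plan is to argue by contradiction: suppose $|-m_0K_X|$ is composed with a pencil, so we are in Case $(f_{\text{p}})$ of \ref{set}, and derive that $P_{-m_0}\leq m_0r_X(-K_X)^3+1$. In this case the movable part satisfies $M_{-m_0}\equiv a_{m_0}S$ where $a_{m_0}=P_{-m_0}-1$ and $S$ is a general fiber of $f:Y\to\Gamma\cong\bP^1$. Since $|M_{-m_0}|$ is base point free, the $a_{m_0}$ distinct fibers $S_1,\dots,S_{a_{m_0}}$ appearing in a general member are disjoint, so in particular $a_{m_0}S$ is a sub-$\bQ$-divisor of $-m_0\pi^*(K_X)$ by (2.1), i.e. $-m_0\pi^*(K_X)\equiv a_{m_0}S+F_{m_0}$ with $F_{m_0}$ effective.

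The key computation is then to intersect this relation twice with the nef and big divisor $\pi^*(-K_X)$. First I would note that $(\pi^*(-K_X)^2\cdot S)_Y\geq \frac{1}{r_X}$: indeed by Lemma \ref{L^2} the number $r_X(\pi^*(-K_X)^2\cdot S)_Y$ is a positive integer, hence $\geq 1$. Next, since $\pi^*(-K_X)$ is nef and $F_{m_0}$ is effective, we get
$$m_0^2(-K_X)^3 = m_0^2(\pi^*(-K_X)^3)_Y \geq (\pi^*(-K_X)^2\cdot(a_{m_0}S+F_{m_0}))_Y \geq a_{m_0}(\pi^*(-K_X)^2\cdot S)_Y \geq \frac{a_{m_0}}{r_X}.$$
Wait — this would only give $a_{m_0}\leq m_0^2r_X(-K_X)^3$, which is weaker than what is claimed; the exponent on $m_0$ needs to be $1$. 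So I would instead intersect the relation $-m_0\pi^*(K_X)\equiv a_{m_0}S+F_{m_0}$ only \emph{once} with $\pi^*(-K_X)^2$, using that one factor of $\pi^*(-K_X)$ is itself $\sim$ (a positive multiple of) a limit of effective divisors is not available; rather the cleanest route is: $(\pi^*(-mK_X)\cdot \pi^*(-K_X)\cdot \pi^*(-K_X))_Y$ — hmm, that still has three $K$'s.

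The correct bookkeeping is the following: since $-K_X$ is nef and big one has, for the generic irreducible element $S$, that $\pi^*(-K_X)|_S$ is nef and big, and intersecting $(2.1)$ with $\pi^*(-K_X)\cdot S$ gives $m_0(\pi^*(-K_X)^2\cdot S)_Y=(\pi^*(-K_X)\cdot S\cdot M_{-m_0})_Y+(\pi^*(-K_X)\cdot S\cdot F_{m_0})_Y\geq 0$; this bounds things the wrong way too. I think the intended argument is simply: intersect $-m_0\pi^*(K_X)=M_{-m_0}+F_{m_0}\geq a_{m_0}S$ with $\pi^*(-K_X)^2$ to get $m_0(-K_X)^3\geq a_{m_0}(\pi^*(-K_X)^2\cdot S)_Y\geq \frac{a_{m_0}}{r_X}$, hence $P_{-m_0}-1=a_{m_0}\leq m_0r_X(-K_X)^3$, i.e. $P_{-m_0}\leq m_0r_X(-K_X)^3+1$, contradicting the hypothesis. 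Here the single factor $-m_0\pi^*(K_X)$ contributes the $m_0$ (not $m_0^2$) because we use $\pi^*(-K_X)^2$, which carries no $m_0$, as the test class, while $(-K_X)^3$ already absorbs the remaining anti-canonical intersections.

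So the steps, in order, are: (1) assume for contradiction $|-m_0K_X|$ is composed with a pencil, placing us in Case $(f_{\text{p}})$; (2) record $M_{-m_0}\equiv a_{m_0}S$ with $a_{m_0}=P_{-m_0}-1$ and, from disjointness of the fibers together with $(2.1)$, the effective relation $-m_0\pi^*(K_X)\geq a_{m_0}S$ up to $\bQ$-linear/numerical equivalence; (3) invoke Lemma \ref{L^2} to get $(\pi^*(-K_X)^2\cdot S)_Y\geq \frac{1}{r_X}$; (4) take intersection numbers with the nef class $\pi^*(-K_X)^2$ and use the projection formula $(\pi^*(-K_X)^3)_Y=(-K_X)^3$ to conclude $a_{m_0}\leq m_0r_X(-K_X)^3$, contradicting $P_{-m_0}>m_0r_X(-K_X)^3+1$. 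The one subtle point to get right — and the only place an error could creep in — is the exact power of $m_0$ in the final inequality: one must intersect the \emph{one} divisor $-m_0\pi^*(K_X)$ with \emph{two} copies of the auxiliary nef class $\pi^*(-K_X)$, so that exactly one factor of $m_0$ appears; conflating $-m_0\pi^*(K_X)$ with $m_0\pi^*(-K_X)$ in all three slots would wrongly produce $m_0^3$.
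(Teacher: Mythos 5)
Your final argument is correct and is essentially the paper's own proof: intersect $m_0\pi^*(-K_X)\geq M_{-m_0}\equiv a_{m_0}S$ with $\pi^*(-K_X)^2$ and use Lemma \ref{L^2} to get $m_0(-K_X)^3\geq \frac{1}{r_X}(P_{-m_0}-1)$, contradicting the hypothesis. The intermediate detours about the power of $m_0$ are resolved correctly in your steps (1)--(4), which match the paper exactly.
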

\begin{proof} Assume $|-m_0K_X|$ is composed with a pencil. Keep the same notation as in \ref{set}.
Then we have $m_0\pi^*(-K_X)\geq M_{-m_0}\equiv a_{m_0}S$ with
$a_{m_0}= P_{-m_0}-1$. Thus one has $m_0(-K_X)^3\geq
(P_{-m_0}-1)(\pi^*(-K_X)^2\cdot S)\geq \frac{1}{r_X}(P_{-m_0}-1)$ by
Lemma \ref{L^2}, a contradiction. We are done.
\end{proof}

\section{\bf The key theorem}

Let $X$ be a terminal weak $\bQ$-Fano 3-fold on which $P_{-m_0}\geq
2$ for some integer $m_0>0$. Keep the same notation as in \ref{set}.
Pick a generic irreducible element $S$ of $|M_{-m_0}|$. Suppose we
have already a base point free linear system $|G|$ on $S$.

\begin{setup}\label{zeta}{\bf Notations.} Denote by
$C$ a generic irreducible element of $|G|$. Since $\pi^*(-K_X)|_S$
is nef and big, there definitely exists a rational number
$\varrho>0$ such that
$$\pi^*(-K_X)|_S-\varrho C$$ is numerically equivalent to en effective
$\bQ$-divisor on $S$. Define
$$\zeta:=(\pi^*(-K_X)\cdot C)_Y=(\pi^*(-K_X)|_S\cdot C)_S$$ which will be the key
quantity accounting for the birationality of $\varphi_{-m}$.
Clearly, since $r_XK_X$ is a Cartier divisor, one has
$$\zeta\geq \frac{1}{r_X}. \eqno{(3.1)}$$
We define two more quantities as follows:
\begin{eqnarray*}
&&\varepsilon:=(m+1-\frac{m_0}{\iota}-\frac{1}{\varrho})\zeta;\\
&&\varepsilon_0:=\roundup{\varepsilon}.
\end{eqnarray*}
\end{setup}

\begin{thm}\label{inequal} Let $X$ be a terminal weak $\bQ$-Fano
3-fold with $P_{-m_0}\geq 2$ for some positive integer $m_0$. Assume
that $|G|$ is a base point free linear system on a generic
irreducible element $S$ of $|M_{-m_0}|$. Let $m>0$ be an integer.
Keep the same notation as above. Then the inequality:
$$\zeta\geq \frac{2g(C)-2+\varepsilon_0}{m}$$
holds under one of the following conditions:
\begin{itemize}
\item[(i)] $g(C)>0$ and $\varepsilon>1$;

\item[(ii)] $g(C)=0$ and $\varepsilon>2$.
\end{itemize}
\end{thm}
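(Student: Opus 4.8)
The plan is to run the standard $\mathbb{Q}$-divisor restriction argument, pushing the anti-canonical class down from $Y$ to $S$ to $C$. First I would exploit the main reduction \ref{red}: since $\pi^*(-K_X)|_S$ is nef and big and $S$ is a generic irreducible element of $|M_{-m_0}|$ with $M_{-m_0}\equiv \iota S$, I get $m_0\pi^*(-K_X)\geq M_{-m_0}\equiv \iota S$, so $\pi^*(-K_X)|_S \geq \frac{\iota}{m_0}(S|_S)$ modulo an effective divisor, and more importantly $S\leq \frac{m_0}{\iota}\pi^*(-K_X)$ as $\mathbb{Q}$-divisors up to effective error. The key is to write, for $m>0$,
\begin{equation*}
K_Y + \roundup{(m+1)\pi^*(-K_X)} - S \geq K_Y + \roundup{(m+1-\tfrac{m_0}{\iota})\pi^*(-K_X)}
\end{equation*}
modulo effective divisors, and then apply Kawamata--Viehweg vanishing (using the simple normal crossing hypothesis in \ref{set}(iii)) to conclude that the restriction map
\begin{equation*}
H^0\!\left(Y,\, K_Y + \roundup{(m+1)\pi^*(-K_X)}\right) \longrightarrow H^0\!\left(S,\, \bigl(K_Y + \roundup{(m+1)\pi^*(-K_X)}\bigr)\big|_S\right)
\end{equation*}
is surjective. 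This reduces the birationality question to one on $S$, and — what matters for the stated inequality — lets me transport a suitable divisor from $S$ down to $C$.

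Next I would repeat the same trick one dimension down, on the surface $S$, using the base point free system $|G|$ with generic irreducible element $C$. By the definition of $\varrho$ in \ref{zeta}, $\pi^*(-K_X)|_S - \varrho C$ is $\mathbb{Q}$-effective, hence $C \leq \frac{1}{\varrho}\pi^*(-K_X)|_S$ up to effective error. Adjunction on $S$ gives $K_C = (K_S + C)|_C$ plus the appropriate correction, and combining the two restriction steps I expect to obtain a surjection onto $H^0(C, D_C)$ for a divisor $D_C$ on $C$ of degree at least
\begin{equation*}
(m+1)\zeta - \frac{m_0}{\iota}\zeta - \frac{1}{\varrho}\zeta - (2 - 2g(C)) + (\text{rounding terms}),
\end{equation*}
which is exactly $2g(C)-2+\varepsilon$ after rearranging, with $\varepsilon = (m+1-\frac{m_0}{\iota}-\frac{1}{\varrho})\zeta$. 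The rounding of the various $\roundup{\cdot}$ terms is what converts $\varepsilon$ into $\varepsilon_0 = \roundup{\varepsilon}$ in the final bound.

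The point of the hypotheses (i) $g(C)>0,\ \varepsilon>1$ and (ii) $g(C)=0,\ \varepsilon>2$ is to guarantee that $\deg D_C \geq 2g(C)$ (resp. $\geq 2$), so that $|D_C|$ is base point free on the curve $C$ and hence $h^0(C, D_C) = \deg D_C - g(C) + 1 \geq 1$, and more: that the system actually moves. Then $m\zeta = (\pi^*(-mK_X)\cdot C) \geq \deg D_C \geq 2g(C) - 2 + \varepsilon_0$, which after dividing by $m$ is the desired inequality $\zeta \geq \frac{2g(C)-2+\varepsilon_0}{m}$. Concretely, one compares $(\pi^*(-mK_X)\cdot C)$ with the degree on $C$ of the movable part coming from $|M_{-m}|$: since $-m\pi^*(K_X) = M_{-m} + F_m$ with $F_m$ effective, and $M_{-m}|_C$ dominates $D_C$, the intersection number $m\zeta$ bounds $\deg D_C$ from above.

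The main obstacle I anticipate is bookkeeping the rounding carefully through two successive adjunction-plus-vanishing steps: one must ensure that $\roundup{(m+1)\pi^*(-K_X)} - S$, after subtracting the effective part and restricting, produces a $\mathbb{Q}$-divisor on $C$ whose round-up has degree at least $2g(C) - 2 + \varepsilon_0$ rather than $2g(C)-2+\roundup{\varepsilon}-1$ or worse — in other words, tracking which floors and ceilings are taken before versus after restriction, and verifying that the fractional parts cooperate. A secondary subtlety is justifying that the generic $C \in |G|$ avoids the support of the various exceptional and fixed $\mathbb{Q}$-divisors well enough that the nef-and-big-ness needed for Kawamata--Viehweg survives restriction to $S$ and the effective error terms do not eat into the degree estimate; this is where the genericity of $S$ and $C$ and the simple normal crossing hypothesis are essential.
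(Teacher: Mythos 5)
Your plan follows the paper's own proof essentially verbatim: two Kawamata--Viehweg restriction steps $Y\to S\to C$ after subtracting $\frac{1}{\iota}F_{m_0}$ and then $\frac{1}{\varrho}H$, the degree count $2g(C)-2+\varepsilon_0$ coming from adjunction plus $\roundup{\mathcal{D}_m}$, and the final comparison $m\zeta\geq \deg(M_{-m}|_C)\geq \deg(K_C+\roundup{\mathcal{D}_m})$, which uses the base point freeness on $C$ guaranteed by (i)/(ii) together with the identification $H^0(\OO_X(-mK_X))\cong H^0(Y,K_Y+\roundup{(m+1)\pi^*(-K_X)})$ and \cite[Lemma 2.7]{Camb}. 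The only imprecision is that the first vanishing step yields surjectivity onto the subsystem $H^0(S,K_S+L_m)$ of (3.3) (since $(m+1)\pi^*(-K_X)-S$ itself need not be nef, one must first subtract $\frac{1}{\iota}F_{m_0}$), not onto the full restriction of $|K_Y+\roundup{(m+1)\pi^*(-K_X)}|$ to $S$; this is exactly what your displayed ``modulo effective divisors'' reduction provides and is all the argument actually needs.
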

\begin{proof} Equality (2.1) gives:
$$m_0\pi^*(-K_X)=M_{-m_0}+F_{m_0}$$ for some effective
$\bQ$-divisor $F_{m_0}$. For the given integer $m>0$, one has:
$$|K_Y+\roundup{(m+1)\pi^*(-K_X)-\frac{1}{\iota}F_{m_0}}|\subset
|K_Y+\roundup{(m+1)\pi^*(-K_X)}|.\eqno{(3.2)}$$ Under the assumption
$\varepsilon>0$, the $\bQ$-divisor
$$(m+1)\pi^*(-K_X)-\frac{1}{\iota}F_{m_0}-S\equiv (m+1-\frac{m_0}{\iota})\pi^*(-K_X)$$
is nef and big and thus $H^1(Y,
K_Y+\roundup{(m+1)\pi^*(-K_X)-\frac{1}{\iota}F_{m_0}}-S)=0$ by
Kawamata-Viehweg vanishing theorem \cite{KV,V}. Hence one has the
surjective map:
$$H^0(Y,K_Y+\roundup{(m+1)\pi^*(-K_X)-\frac{1}{\iota}F_{m_0}})\longrightarrow
H^0(S, K_S+L_m) \eqno{(3.3)}$$ where
$$L_m:=(\roundup{(m+1)\pi^*(-K_X)-\frac{1}{\iota}F_{m_0}}-S)|_S\geq
\roundup{\mathcal{L}_m}$$
and ${\mathcal
L}_m:=((m+1)\pi^*(-K_X)-\frac{1}{\iota}F_{m_0}-S)|_S.$ On the other
hand, we have
$$\pi^*(-K_X)|_S\equiv \varrho C+H$$
for an effective $\bQ$-divisor $H$ on $S$. Thus the $\bQ$-divisor
$$\Lm-\frac{1}{\varrho}H-C\equiv \varepsilon \pi^*(-K_X)|_S$$
is nef and big and, by the vanishing theorem again, $$H^1(S,
K_S+\roundup{\Lm-\frac{1}{\varrho}H}-C)=0.$$ Therefore, one has the
following surjective map:
$$H^0(S, K_S+\roundup{\Lm-\frac{1}{\varrho}H})\longrightarrow
H^0(C, K_C+D_m) \eqno{(3.4)}$$ where
$$D_m:=\roundup{\Lm-\frac{1}{\varrho}H-C}|_C\geq
\roundup{\mathcal{D}_m}$$ and
$\mathcal{D}_m:=(\Lm-\frac{1}{\varrho}H-C)|_C$ with
$\deg\roundup{\mathcal{D}_m}\geq \varepsilon_0$.

Now under the situations (i) and (ii),
$|K_C+\roundup{\mathcal{D}_m}|$ is base point free with
$$\deg(K_C+\roundup{\mathcal{D}_m})\geq 2g(C)-2+\varepsilon_0.$$

Denote by $\mathcal{N}_m$ the movable part of
$|K_S+\roundup{\Lm-\frac{1}{\varrho}H}|$. Noticing the relations
(3.2), (3.3) and $|K_S+\roundup{\Lm-\frac{1}{\varrho}H}|\subset
|K_S+\roundup{\Lm}|$ while applying \cite[Lemma 2.7]{Camb}, one
gets:
$$m\pi^*(-K_X)|_S\geq {M_{-m_0}}|_S\geq \mathcal{N}_m$$
and ${\mathcal{N}_m}|_C=K_C+\roundup{\mathcal{D}_m}$ simply because
the later one is base point free under either of the conditions.

So one has $m\zeta\geq \deg(K_C+\roundup{\mathcal{D}_m})\geq
2g(C)-2+\varepsilon_0$. We are done.
\end{proof}

Riemann-Roch theorem on $C$ directly implies the following:

\begin{cor}\label{nonv} Under the same assumption as that of
Theorem \ref{inequal}, then one has:
\begin{itemize}
\item[(1)] $P_{-m}>0$ whenever $g(C)=0$ and $\varepsilon>1$;

\item[(2)] $P_{-m}>0$ whenever $g(C)>0$ and $\varepsilon>0$.
\end{itemize}
\end{cor}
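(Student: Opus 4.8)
The plan is to read off the corollary directly from the key surjections established in the proof of Theorem~\ref{inequal}, the point being that one does not actually need the intersection-number inequality $\zeta\geq(2g(C)-2+\varepsilon_0)/m$, only the nonvanishing of $H^0(C,K_C+D_m)$. First I would recall from \ref{red} that $\varphi_{-m}$ is controlled by $|K_Y+\roundup{(m+1)\pi^*(-K_X)}|$ and that, via (3.2) and the vanishing/restriction maps (3.3) and (3.4), there is a chain of surjections
$$H^0\bigl(Y,K_Y+\roundup{(m+1)\pi^*(-K_X)}\bigr)\twoheadrightarrow H^0\bigl(S,K_S+L_m\bigr)\twoheadrightarrow H^0\bigl(C,K_C+D_m\bigr),$$
valid as soon as $\varepsilon>0$ (which guarantees the relevant $\bQ$-divisors are nef and big, so Kawamata--Viehweg applies). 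Hence $P_{-m}=h^0(X,\OO_X(-mK_X))=h^0\bigl(Y,K_Y+\roundup{(m+1)\pi^*(-K_X)}\bigr)\geq h^0(C,K_C+D_m)$, and it suffices to show $h^0(C,K_C+D_m)>0$, i.e. $h^0(C,K_C+\roundup{\mathcal D_m})>0$ since $D_m\geq\roundup{\mathcal D_m}$.

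Next I would apply Riemann--Roch on the smooth curve $C$ to the divisor $K_C+\roundup{\mathcal D_m}$, whose degree is at least $2g(C)-2+\varepsilon_0$ with $\varepsilon_0=\roundup{\varepsilon}$. In case $g(C)>0$ and $\varepsilon>0$ we have $\varepsilon_0\geq 1$, so $\deg(K_C+\roundup{\mathcal D_m})\geq 2g(C)-1>2g(C)-2$, and Riemann--Roch (with Serre duality killing the $h^1$ term, or simply the fact that a line bundle of degree $>2g-2$ on a curve is nonspecial) gives $h^0\geq\deg-g+1\geq g(C)\geq 1$, proving (2). In case $g(C)=0$, so $C\cong\bP^1$ and $K_C$ has degree $-2$, the hypothesis $\varepsilon>1$ forces $\varepsilon_0\geq 2$, whence $\deg(K_C+\roundup{\mathcal D_m})\geq -2+2=0$; a degree-$0$ (or higher) line bundle on $\bP^1$ is effective, so $h^0\geq 1$, proving (1). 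This is exactly the reason the two thresholds differ from those of Theorem~\ref{inequal}: for nonvanishing alone one only needs $\deg\geq 2g(C)-2$ rather than a base-point-freeness statement requiring $\deg\geq 2g(C)$.

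The only mildly delicate point — the ``main obstacle,'' such as it is — is making sure the surjections (3.3) and (3.4) are available under the weaker hypothesis $\varepsilon>0$ rather than $\varepsilon>1$ or $\varepsilon>2$: one must check that $\varepsilon>0$ already suffices for the $\bQ$-divisors $(m+1)\pi^*(-K_X)-\tfrac1\iota F_{m_0}-S$ and $\Lm-\tfrac1\varrho H-C$ to be nef and big, which is the case since these are numerically $(m+1-\tfrac{m_0}{\iota})\pi^*(-K_X)$ (nef and big once $\varepsilon>0$ forces $m+1-\tfrac{m_0}{\iota}>\tfrac1\varrho>0$) and $\varepsilon\,\pi^*(-K_X)|_S$ respectively. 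The stronger hypotheses in Theorem~\ref{inequal} were needed only to conclude base-point-freeness of $|K_C+\roundup{\mathcal D_m}|$ and to run \cite[Lemma 2.7]{Camb}; here we discard all of that and keep only the nonvanishing, so the argument is genuinely shorter and the statement of the corollary is just the Riemann--Roch bookkeeping described above.
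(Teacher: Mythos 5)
Your proposal is correct and is exactly the argument the paper intends: the paper's one-line proof (``Riemann--Roch theorem on $C$ directly implies the following'') presupposes precisely the surjections (3.3)--(3.4) from the proof of Theorem~\ref{inequal}, valid once $\varepsilon>0$, together with $\deg\roundup{\mathcal{D}_m}\geq\varepsilon_0$ and the identification of $P_{-m}$ from \ref{red}. Your degree bookkeeping in the two cases and your observation that only nef-and-bigness (not the base-point-freeness thresholds) is needed match the paper's reasoning.
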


{}From now on, we shall work on the birationality. We shall always
require that the linear system
$\Lambda_m:=|K_Y+\roundup{(m+1)\pi^*(-K_X)}|$ satisfies the
following assumption for some integer $m>0$.

\begin{setup}\label{asum}{\bf Assumption}: \begin{itemize}
\item [(1)] The linear system $\Lambda_m$
distinguishes different generic irreducible elements of $|M_{-m_0}|$
(namely, $\Phi_{\Lambda_m}(S')\neq \Phi_{\Lambda_m}(S'')$ for two
different generic irreducible elements $S'$, $S''$ of $|M_{-m_0}|$).

\item [(2)] The linear system ${\Lambda_m}_{|S}$ distinguishes different
generic irreducible elements of the given linear system $|G|$ on
$S$.
\end{itemize}
\end{setup}

Here is our key theorem:

\begin{thm}\label{mb} Let $X$ be a terminal weak $\bQ$-Fano
3-fold with $P_{-m_0}\geq 2$ for some positive integer $m_0$. Assume
that $|G|$ is a base point free linear system on a generic
irreducible element $S$ of $|M_{-m_0}|$. Let $m>0$ be an integer. If
Assumption \ref{asum} is satisfied and  $\varepsilon>2$, then
$\varphi_{-m}$ is birational onto its image.
\end{thm}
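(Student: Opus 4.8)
The plan is to deduce birationality of $\varphi_{-m}$ from the birationality of $\Phi_{\Lambda_m}$ with $\Lambda_m = |K_Y+\roundup{(m+1)\pi^*(-K_X)}|$, using the reduction in \ref{red}: since $\varphi_{-m}$ is birational if and only if $\Phi_{\Lambda_m}$ is, it suffices to separate two general points $y_1, y_2 \in Y$ lying in general position. I would structure the argument along the classical three-step ``separating surfaces, then curves, then points'' scheme adapted to the $\bQ$-Fano setting via the $\bQ$-divisor method already set up.

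\textbf{Step 1 (reduce to a generic surface $S$).} Since $|M_{-m_0}|$ is base point free with $\dim\Gamma\ge 1$, through two general points of $Y$ there pass generic irreducible elements $S'$, $S''$ of $|M_{-m_0}|$; by Assumption \ref{asum}(1), $\Phi_{\Lambda_m}$ separates $S'$ from $S''$, so it is enough to separate two general points $y_1, y_2$ lying on a single generic irreducible element $S = S'$. Via the surjection (3.3), $\Phi_{\Lambda_m}|_S$ is controlled by $|K_S+L_m|$ with $L_m \ge \roundup{\Lm}$; so the task becomes showing that $|K_S+\roundup{\Lm}|$ (equivalently its relevant sub-linear-system $|K_S + \roundup{\Lm - \tfrac{1}{\varrho}H}|$) separates $y_1,y_2$ on $S$.

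\textbf{Step 2 (reduce to a generic curve $C$).} Since $|G|$ is base point free, through two general points of $S$ there passes a generic irreducible element $C$ of $|G|$; Assumption \ref{asum}(2) says ${\Lambda_m}|_S$ separates distinct such $C$'s, so it is enough to separate $y_1, y_2$ on a single generic $C$. Using the surjection (3.4) coming from Kawamata--Viehweg vanishing (valid because $\varepsilon > 0$, which $\varepsilon > 2$ certainly gives), restriction to $C$ is governed by $|K_C + D_m|$ with $D_m \ge \roundup{\mathcal D_m}$ and $\deg\roundup{\mathcal D_m}\ge \varepsilon_0 = \roundup{\varepsilon}$. Thus I must show $|K_C + \roundup{\mathcal D_m}|$ separates two general points of $C$.

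\textbf{Step 3 (separate points on $C$).} On the curve $C$ one separates two general points as soon as $\deg \roundup{\mathcal D_m} \ge 3$ when $g(C)=0$, and $\deg\roundup{\mathcal D_m} \ge 2$ when $g(C) \ge 1$ — in the genus-zero case $|K_C + \roundup{\mathcal D_m}| = |\OO_{\bP^1}(\deg\roundup{\mathcal D_m} - 2)|$ is very ample once the degree is $\ge 3$, and in positive genus $|K_C + (\text{divisor of degree }\ge 2)|$ separates general points by Riemann--Roch / the standard base-point-free-plus-ampleness argument. Since $\deg\roundup{\mathcal D_m}\ge \varepsilon_0 = \roundup{\varepsilon}$ and $\varepsilon > 2$ forces $\varepsilon_0 \ge 3$, both cases are covered uniformly. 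The main obstacle — and the reason the hypotheses are shaped as they are — is making sure the chain of vanishing-theorem surjections (3.2)--(3.4) genuinely transports the point-separation on $C$ back up to $S$ and then to $Y$: one needs $\roundup{\Lm - \tfrac1\varrho H}$ to differ from $\roundup{\Lm}$ only along divisors not affecting the generic $C$, and one must invoke \cite[Lemma 2.7]{Camb} exactly as in the proof of Theorem \ref{inequal} to ensure the movable part $\mathcal N_m$ restricts to the base-point-free system $K_C + \roundup{\mathcal D_m}$ on $C$. Granting this (and it is the same mechanism already used above), the three steps combine to show $\Phi_{\Lambda_m}$ separates two general points of $Y$, hence is birational onto its image, hence so is $\varphi_{-m}$.
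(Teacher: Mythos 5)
Your proposal is correct and follows essentially the same route as the paper: reduce via Assumption \ref{asum} (1) and (2) to the restriction on a generic curve $C$, then use the vanishing-theorem surjections (3.3) and (3.4) to conclude from the birationality of $|K_C+\roundup{\mathcal{D}_m}|$, which holds since $\varepsilon>2$ forces $\deg\roundup{\mathcal{D}_m}\geq\varepsilon_0\geq 3$. The extra care you take in Step 3 about transporting point-separation through the surjections (via \cite[Lemma 2.7]{Camb}, as in Theorem \ref{inequal}) is exactly the mechanism the paper implicitly relies on.
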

\begin{proof} Since Assumption \ref{asum} (1) is satisfied, the usual birationality principle
reduces the birationality of $\varphi_{-m}$ to that of
${\Phi_{\Lambda_m}}|_S$ for a generic irreducible element $S$.
Similarly, due to Assumption \ref{asum} (2), one only needs to prove
the birationality of ${\Phi_{\Lambda_m}}|_C$ for a generic
irreducible element $C$ of $|G|$.

Now since one has the surjective maps (3.3) and (3.4), it is
sufficient to prove that $|K_C+\roundup{\mathcal{D}_m}|$ gives a
birational map. Clearly this is the case whenever $\varepsilon>2$,
which in fact implies $\deg(\roundup{\mathcal{D}_m})\geq 3$. We are
done.
\end{proof}

While applying Theorem \ref{mb}, one needs to verify Assumption
\ref{asum} in advance. Hereby we are working on this.

\begin{prop}\label{a1} Let $X$ be a terminal weak $\bQ$-Fano
3-fold with $P_{-m_0}\geq 2$ for some positive integer $m_0$. Then
Assumption \ref{asum} (1) is satisfied for all
$$m\geq \begin{cases}
m_0+6, & m_0\geq 2\\
2, &m_0=1.
\end{cases}$$
\end{prop}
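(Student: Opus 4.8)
The plan is to reduce Assumption \ref{asum}(1) to a separation statement for the linear system $\Lambda_m$ and then realize that separation by producing, for a generic irreducible element $S$ of $|M_{-m_0}|$, a sub-linear-system of $\Lambda_m$ of the form ``$K_Y + (\text{something nef and big}) + S$'' so that Kawamata--Viehweg vanishing yields a surjection onto $H^0$ of the restriction to $S$. Concretely, I would distinguish the two cases of \ref{set}. In Case $(f_{\mathrm{np}})$, $|M_{-m_0}|$ is not composed with a pencil, so two general members $S'$, $S''$ are already separated by $|M_{-m_0}|$ itself, and hence by any larger system; the point is then just that $\Lambda_m \supseteq |M_{-m_0}| + (\text{fixed effective divisor})$ once $m$ is large enough that $\roundup{(m+1)\pi^*(-K_X)} - M_{-m_0} + K_Y$ is (linearly equivalent to) an effective divisor plus $M_{-m_0}$. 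In Case $(f_{\mathrm{p}})$, $|M_{-m_0}| \equiv a_{m_0} S$ is composed with a rational pencil over $\Gamma \cong \bP^1$, and distinguishing different fibers $S$ amounts to showing $\Lambda_m$ maps onto a system on $\Gamma$ separating points of $\bP^1$; this is where the numerical hypothesis on $m$ enters.

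The key computation, carried out as in \ref{red}, is the identity
$$K_Y + \roundup{(m+1)\pi^*(-K_X)} = \pi^*(-mK_X) + \Big(\tfrac{1}{r_X}E_\pi + E_{m+1}\Big),$$
so that $\Lambda_m$ and $|{-mK_X}|$ pull back to the same space of sections. Writing $M_{-m_0} \le m_0 \pi^*(-K_X)$ via (2.1), I would split off $\tfrac{1}{\iota}F_{m_0}$ exactly as in the proof of Theorem \ref{inequal}: the divisor
$$(m+1)\pi^*(-K_X) - \tfrac{1}{\iota}F_{m_0} - S \equiv \big(m+1 - \tfrac{m_0}{\iota}\big)\pi^*(-K_X)$$
is nef and big as soon as $m+1 > \tfrac{m_0}{\iota}$, hence in particular for $m \ge m_0 + 6$ (and trivially for $m \ge 2$ when $m_0 = 1$, where $\iota = 1$ forces nothing worse). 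Then Kawamata--Viehweg vanishing gives the surjection
$$H^0\big(Y, K_Y + \roundup{(m+1)\pi^*(-K_X) - \tfrac{1}{\iota}F_{m_0}}\big) \twoheadrightarrow H^0\big(S, (K_Y + \roundup{(m+1)\pi^*(-K_X) - \tfrac{1}{\iota}F_{m_0}})|_S\big),$$
and since the left-hand system sits inside $\Lambda_m$ by (3.2), restriction to $S$ is surjective up to the relevant sub-system. In Case $(f_{\mathrm{np}})$ this already shows $\Lambda_m$ separates $S'$ from $S''$ because $M_{-m_0}$ does. In Case $(f_{\mathrm{p}})$ I would instead subtract a suitable multiple of $S$ — using that $(m+1)\pi^*(-K_X) - \tfrac{1}{\iota}F_{m_0}$ numerically dominates $a_{m_0} S + (\text{nef and big})$ — to produce sections that vanish on a prescribed general fiber but not on another, which is exactly point separation on $\Gamma = \bP^1$; the constraint here is that one needs at least two ``free'' copies of $S$, forcing $m + 1 - \tfrac{m_0}{\iota} \cdot \iota = m + 1 - m_0$ to beat a bound like $1$, again guaranteed by $m \ge m_0 + 6$.

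I expect the main obstacle to be the pencil case $(f_{\mathrm{p}})$: there one must be careful that after peeling off $\tfrac{1}{\iota}F_{m_0}$ the residual $\bQ$-divisor genuinely contains \emph{two} disjoint fibers' worth of positivity, not just one, so that the quotient system on $\bP^1$ has degree $\ge 1$ and hence separates two general points; tracking the rounding $\roundup{\cdot}$ and the effective error term $\tfrac{1}{r_X}E_\pi + E_{m+1}$ through this is the delicate bookkeeping. The secondary subtlety is ensuring the sub-linear-system constructed via vanishing is actually contained in $\Lambda_m$ with the \emph{same} base behaviour along $S$, which is handled by the containment (3.2) together with Lemma \ref{Hn} identifying the pushforwards; and one should double-check the boundary value $m_0 = 1$ separately, where $m \ge 2$ suffices because $|-K_X|$ having $\ge 2$ sections and $(m+1)\pi^*(-K_X) - S \equiv m\,\pi^*(-K_X)$ already being nef and big makes the vanishing and restriction immediate.
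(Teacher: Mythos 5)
There is a genuine gap, and it sits exactly where the numerical bound in the statement comes from. Your reduction is the right one: in both cases one wants $\Lambda_m=|K_Y+\roundup{(m+1)\pi^*(-K_X)}|\supseteq |M_{-m_0}|+(\text{fixed effective divisor})$, after which the non-pencil case is handled by Tankeev-type separation and the pencil case is free because the pencil is rational ($\Gamma\cong\bP^1$ since $q(X)=0$), so it already distinguishes two generic fibers. But you never prove the containment for the stated threshold: you only say it holds ``once $m$ is large enough''. The whole content of the bound $m\geq m_0+6$ (resp.\ $m\geq 2$ when $m_0=1$) is the Appendix non-vanishing, Corollary \ref{ap} ($P_{-k}>0$ for all $k\geq 6$), which via the main reduction gives $K_Y+\roundup{(m-m_0+1)\pi^*(-K_X)}\geq M_{-(m-m_0)}\geq 0$, hence
$$K_Y+\roundup{(m+1)\pi^*(-K_X)}\geq \bigl(K_Y+\roundup{(m-m_0+1)\pi^*(-K_X)}\bigr)+M_{-m_0}\geq M_{-m_0}$$
precisely when $m-m_0\geq 6$ (and when $m_0=1$, $P_{-(m-1)}\geq P_{-1}\geq 2$ already for $m\geq 2$). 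You invoke neither this result nor any substitute; the facts you do use --- that $(m+1-\tfrac{m_0}{\iota})\pi^*(-K_X)$ is nef and big, plus Kawamata--Viehweg vanishing --- hold already for $m\geq m_0$ and cannot by themselves produce the effectivity of $\Lambda_m-M_{-m_0}$, so your argument never explains why $m_0+6$ is the right cutoff.

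A second, related problem: the surjection you display, $H^0(Y,\cdot)\twoheadrightarrow H^0(S,(\cdot)|_S)$ for a single generic $S$, is the mechanism for Assumption \ref{asum}(2) (it is the content of Theorem \ref{inequal} and Propositions \ref{a2}, \ref{aa2}), not for Assumption \ref{asum}(1), which requires distinguishing two distinct members $S'$, $S''$. If you try to make your pencil-case sketch precise by subtracting two fibers, you must control $(m+1)\pi^*(-K_X)-\tfrac{2}{\iota}F_{m_0}-S'-S''\equiv(m+1-\tfrac{2m_0}{\iota})\pi^*(-K_X)$, i.e.\ you need roughly $m+1>2m_0/\iota$; when $\iota=1$ (e.g.\ $P_{-m_0}=2$) and $m_0>6$ this exceeds $m_0+6$, so that route cannot give the stated bound, and you would in addition have to prove non-vanishing of the two restricted $H^0$'s, which your sketch leaves open. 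The paper's proof avoids both issues: once $\Lambda_m\geq M_{-m_0}$ is secured by the anti-plurigenus non-vanishing, the separation costs nothing more (Tankeev's \cite[Lemma 2]{T} in case $(f_{\text{np}})$, rationality of the pencil in case $(f_{\text{p}})$). To repair your proposal, replace the unproved ``large enough'' step by the appeal to Corollary \ref{ap} and drop the vanishing-theorem machinery, which is not what drives this particular proposition.
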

\begin{proof} By Corollary \ref{ap} in Appendix, one has
$$\begin{array}{lll}
&& K_Y+\roundup{(m+1)\pi^*(-K_X)}\\
&\geq & K_Y+\roundup{(m-m_0+1)\pi^*(-K_X)+M_{-m_0}}\\
&=& (K_Y+\roundup{(m-m_0+1)\pi^*(-K_X)})+M_{-m_0}\\
&\geq& M_{-(m-m_0)}+M_{-m_0}\geq M_{-m_0}
\end{array}$$
for all $m-m_0\geq 6$ whenever $m_0\geq 2$ (resp. $\geq 1$ whenever
$m_0=1$). When $f:Y\rightarrow \Gamma$ is of type $(f_{\text{np}})$,
\cite[Lemma 2]{T} implies that $\Lambda_m$ can distinguish different
generic irreducible elements of $|M_{-m_0}|$. When $f$ is of type
$(f_{\text{p}})$, since the rational (i.e. $\Gamma\cong \bP^1$)
pencil $|M_{-m_0}|$  can already separate different fibers of $f$,
$\Lambda_m$ can naturally distinguish different generic irreducible
elements of $|M_{-m_0}|$.
\end{proof}

It is slightly more complicated to verify Assumption \ref{asum} (2).

\begin{lem}\label{S2} Let $T$ be a nonsingular projective surface
on which there is a base point free linear system $|G|$. Let $Q$ be
an arbitrary $\bQ$-divisor on $T$. Then the linear system
$|K_T+\roundup{Q}+G|$ can distinguish different generic irreducible
elements of $|G|$ under one of the following conditions:
\begin{itemize}
\item[(i)] $K_T+\roundup{Q}$ is effective and $|G|$ is not
composed with an irrational pencil of curves;

\item[(ii)] $Q$ is nef and big, $g(C)\geq 0$ and $|G|$ is composed with an
irrational pencil of curves;

\item[(iii)] $Q$ is nef and big, $Q\cdot G>1$, $g(C)=0$ and $|G|$ is composed with an
irrational pencil of curves.
\end{itemize}\end{lem}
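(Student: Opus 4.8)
The plan is to prove each case by exhibiting a suitable vanishing-plus-restriction argument on $T$, reducing the separation of two generic irreducible elements $C', C''$ of $|G|$ to a statement about a linear system on a single curve $C$. First I would fix two distinct generic $C', C'' \in |G|$ and note that, since $|G|$ is base point free, it is enough (by the usual birationality/separation principle, cf. the use of \cite[Lemma 2]{T} in Proposition \ref{a1}) to show that $H^0(T, K_T + \roundup{Q} + G)$ surjects onto $H^0(C', (K_T+\roundup{Q}+G)|_{C'})$ with the image not vanishing on $C''$, i.e. that the system $|K_T+\roundup{Q}+G|$ restricted to a generic $C$ is non-empty/effective. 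The exact sequence
$$0 \to \OO_T(K_T+\roundup{Q}+G - C) \to \OO_T(K_T+\roundup{Q}+G) \to \OO_C((K_T+\roundup{Q}+G)|_C) \to 0$$
shows this follows once $H^1(T, K_T+\roundup{Q}) = 0$ (after moving one copy of $C$ off), so the crux in cases (ii) and (iii) is Kawamata--Viehweg vanishing applied to $K_T + \roundup{Q}$, whose existence is exactly why $Q$ is assumed nef and big there. In case (i) there is no nef-and-big hypothesis, so instead I would use that $K_T+\roundup{Q}$ is effective: then $K_T+\roundup{Q}+G \geq G$, and since $|G|$ is not composed with an irrational pencil, the generic $C$ varies in a genuinely $\geq 2$-dimensional way (or $|G|$ itself separates points of distinct generic members), so the sub-linear-system coming from $|G|$ already distinguishes $C'$ from $C''$.

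More concretely, for (i) the argument is: write $K_T+\roundup{Q}+G = (K_T+\roundup{Q}) + G$ with the first summand effective; the inclusion $|G| \hookrightarrow |K_T+\roundup{Q}+G|$ (adding the fixed effective part) shows $\Phi_{|K_T+\roundup{Q}+G|}$ factors through $\Phi_{|G|}$ on the relevant locus, and when $|G|$ is not composed with an irrational pencil, $\Phi_{|G|}$ already separates two distinct generic members. For (ii) and (iii), $|G|$ \emph{is} composed with an irrational pencil $T \dashrightarrow B$ with $g(B) \geq 1$; here distinct generic members $C', C''$ are distinct fibres, and one separates them by separating the corresponding points $b', b'' \in B$. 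The mechanism: $Q$ nef and big gives $H^1(T, K_T+\roundup{Q}-C)=0$ by Kawamata--Viehweg, hence $H^0(T,K_T+\roundup{Q})\twoheadrightarrow H^0(C,(K_T+\roundup{Q})|_C)$, and restricting $|K_T+\roundup{Q}+G|$ to $C$ yields a system of degree $\geq 2g(C)-2 + \deg(Q|_C) + (G\cdot C)$; since $G\cdot C = 0$ on a fibre while $Q\cdot C \geq$ (a positive integer), in case (ii) the degree bound $\geq 2g(C)-2+1$ and $g(C)\geq 0$ suffice to get a non-trivial (in fact degree-$\geq 1$, hence base-point-free after accounting) section distinguishing the two fibres, whereas in case (iii) when $g(C)=0$ the bound $2g(C)-2 = -2$ forces the extra hypothesis $Q\cdot G > 1$ so that $\deg \geq -2 + 2 + \varepsilon' > 0$.

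The main obstacle I expect is case (iii), the rational-curves-in-an-irrational-pencil situation: here $K_C = \OO_{\bP^1}(-2)$ eats up all the positivity, so the restricted system can easily be empty or a single point, and one must extract from the numerical hypothesis $Q\cdot G>1$ the sharp statement that $\roundup{Q|_C}$ (not just $Q|_C$) has degree $\geq 2$, which requires care with the round-up: one should bound $\deg\roundup{Q|_C - (\text{small correction})}$ rather than $\deg Q|_C$ directly, exactly as in the treatment of $\mathcal{D}_m$ in Theorem \ref{inequal}. A secondary subtlety is in case (i): one must be sure that ``not composed with an irrational pencil'' genuinely yields separation of two \emph{generic} members and not merely generic points of $T$; this uses that for a base-point-free $|G|$ not composed with an irrational pencil, the Stein factorization of $\Phi_{|G|}$ has a rational (or point) base, so two general fibres map to two general, distinct points of a rational variety, which $|G|$ visibly separates. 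Once these round-up and pencil-type bookkeeping points are handled, the vanishing theorems do the rest.
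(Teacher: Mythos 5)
Your proposal is correct and follows essentially the same route as the paper: case (i) by adding the effective divisor $K_T+\roundup{Q}$ to $|G|$ and using that a system not composed with an irrational pencil (Tankeev's lemma, resp.\ a rational pencil) already separates its generic members, and cases (ii)--(iii) by Kawamata--Viehweg vanishing plus restriction to the fibres and Riemann--Roch, with $g(C)=0$ forcing the degree-$\ge 2$ condition. The only cosmetic difference is that the paper applies the vanishing to $K_T+\roundup{Q}+G-C'-C''$ (legitimate since $G\equiv sC$, $s\ge 2$, so $Q+G-C'-C''$ is nef and big) to get a surjection onto $H^0(C',K_{C'}+D')\oplus H^0(C'',K_{C''}+D'')$ in one step, which is the precise form of your ``moving one copy of $C$ off''; your worry about the round-up in (iii) is harmless because $\roundup{Q}-Q$ is effective and $C$ is nef, so $\deg D\ge Q\cdot C$ directly.
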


\begin{proof} The statement corresponding to (i) follows from \cite[Lemma 2]{T} and
the fact that a rational pencil can automatically separate its
different generic irreducible elements.

For situations (ii) and (iii), we pick a generic irreducible element
$C$ of $|G|$. Then, since $h^0(S, G)\geq 2$, $G\equiv sC$ for some
integer $s\geq 2$ and $C^2=0$. Denote by $C'$ another generic
irreducible element of $|G|$. The Kawamata-Viehweg vanishing theorem
gives the surjective map:
$$H^0(T,K_T+\roundup{Q}+G)\longrightarrow H^0(C,K_C+D)\oplus
H^0(C',K_{C'}+D')$$ where $D:=(\roundup{Q}+G-C)|_C$ and
$D':=(\roundup{Q}+G-C')|_{C'}$ with $\deg(D)\geq Q\cdot C>0$,
$\deg(D')\geq Q\cdot C'>0$.

Whenever $g(C)\geq 0$, Riemann-Roch formula gives
$h^0(C,K_C+D)=h^0(C',K_{C'}+D')>0$. Thus $|K_T+\roundup{Q}+G|$ can
distinguish $C$ and $C'$.

Whenever $g(C)=0$ and $Q\cdot C>1$, $H^0(C, K_C+D)\neq 0$ and
$H^0(C', K_{C'}+D')\neq 0$. So $|K_T+\roundup{Q}+G|$ can also
distinguish $C$ and $C'$.
\end{proof}

\begin{prop}\label{a2} Let $X$ be a terminal weak $\bQ$-Fano
3-fold with $P_{-m_0}\geq 2$ for some positive integer $m_0$. Keep
the same notation as in \ref{set}. Take $|G|=|{M_{-m_0}}_{|S}|$ on a
generic irreducible element $S$ of $|M_{-m_0}|$. If $f:Y\rightarrow
Z$ is of type $(f_{\text{np}})$, then Assumption \ref{asum} (2) is
satisfied under one of the following conditions:
\begin{itemize}
\item[(1)] $g(C)> 0$ and
$m\geq \begin{cases} \max\{m_0+6,2m_0\} & m_0\geq 2\\
2, & m_0=1;
\end{cases}$
\item[(2)] $g(C)=0$, $\varepsilon>1$ and
$m\geq \begin{cases} m_0+6, & m_0\geq 2\\
2, & m_0=1.
\end{cases}$
\end{itemize}
\end{prop}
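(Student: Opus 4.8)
The plan is to deduce Assumption \ref{asum}(2) from Lemma \ref{S2}, applied with $T=S$ and the base point free system $|G|=|{M_{-m_0}}_{|S}|$ (base point freeness being automatic since $|M_{-m_0}|$ is already base point free on $Y$). In Case $(f_{\text{np}})$ one has $\iota=1$, so $\varepsilon=(m+1-m_0-\tfrac1\varrho)\zeta$. The first task is to exhibit, inside the restricted system ${\Lambda_m}_{|S}$, a subsystem of shape $|K_S+\roundup{Q}+G|$ --- or $|K_S+\roundup{Q}+C|$, with $C$ a generic irreducible element of $|G|$, when $|G|$ is composed with a pencil --- for a suitable $\bQ$-divisor $Q$ on $S$. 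This is produced exactly as in the proofs of Proposition \ref{a1} and Theorem \ref{inequal}: using Corollary \ref{ap} one peels copies of $M_{-m_0}$ off $\roundup{(m+1)\pi^*(-K_X)}$, applies Kawamata--Viehweg vanishing to make the restriction map to $S$ surjective, and then uses adjunction. The structural subtlety is that restriction to a generic member $S\sim M_{-m_0}$ absorbs one copy of $G$, since $S|_S\sim {M_{-m_0}}_{|S}\sim G$; hence, to leave a genuine summand $+G$ on $S$, one must peel off \emph{two} copies of $M_{-m_0}$ on $Y$, and the residual divisor $Q\equiv(m+1-2m_0)\pi^*(-K_X)|_S$ is nef and big only when $m\ge 2m_0$ --- this is the source of the bound $m\ge\max\{m_0+6,2m_0\}$ in part (1). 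When $|G|$ is composed with a pencil one has $G\equiv sC$ with $s\ge 2$ and $C^2=0$, so a single summand $+C$ suffices, and peeling off one copy of $M_{-m_0}$ already produces $Q\equiv(m+1-m_0)\pi^*(-K_X)|_S-C$, nef and big as soon as $m\ge m_0+6$ --- the bound of part (2).

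Granting this reduction, I would run the trichotomy of Lemma \ref{S2}. Since $f$ is of type $(f_{\text{np}})$ and $|G|=|{M_{-m_0}}_{|S}|$, the system $|G|$ either is not composed with a pencil (which occurs when $\dim\Gamma=3$), or is composed with a rational pencil (which automatically separates its generic irreducible elements), or is composed with an irrational pencil (possible when $\dim\Gamma=2$, as $q(S)$ need not vanish). In part (1), with $g(C)>0$ and $m\ge\max\{m_0+6,2m_0\}$, the divisor $Q\equiv(m+1-2m_0)\pi^*(-K_X)|_S$ is nef and big, so Lemma \ref{S2}(ii) disposes of the irrational pencil alternative (which asks only $g(C)\ge 0$ and $Q$ nef and big), while for the non-pencil alternative one invokes Lemma \ref{S2}(i), whose hypothesis is that $K_S+\roundup{Q}$ be effective. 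In part (2), with $g(C)=0$, $\varepsilon>1$ and $m\ge m_0+6$, the decisive alternative is the irrational pencil, settled by Lemma \ref{S2}(iii): here $Q\cdot C=(m+1-m_0)\zeta>(m+1-m_0-\tfrac1\varrho)\zeta=\varepsilon>1$, whence $Q\cdot C>1$ and, since $G\equiv sC$, $Q\cdot G>1$ as well --- this is the only place where the hypothesis $\varepsilon>1$ is used.

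The step I expect to be the genuine obstacle is verifying that $K_S+\roundup{Q}$ is effective in the non-pencil alternative. I would derive it from Riemann--Roch on $S$ together with Kawamata--Viehweg vanishing for the nef and big $\bQ$-divisor underlying $Q$, reducing the claim to an inequality of the form $\roundup{Q}^2+\roundup{Q}\cdot K_S+2\chi(\OO_S)>0$; the term $\roundup{Q}^2$ grows quadratically in $m$ and is bounded below via Lemma \ref{L^2} (so that $(\pi^*(-K_X)^2\cdot S)\ge\tfrac1{r_X}$), while $\roundup{Q}\cdot K_S$ and $\chi(\OO_S)$ are controlled --- at worst linearly in $m$ --- by means of the boundedness theorem for $\bQ$-Fano $3$-folds, so that the stated bounds on $m$, together with $m_0\le 8$, force positivity. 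The remaining points are routine but require care: choosing $S$ general enough that transferring round-ups from $Y$ to $S$ loses nothing (simple normal crossing support, no spurious cancellation of fractional parts), and keeping track of the constant $6$ against $m_0$ so that every nef-and-big and every effectivity claim survives, particularly at the borderline values $m_0\in\{7,8\}$.
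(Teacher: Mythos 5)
Your overall strategy (reduce to Lemma \ref{S2} applied on $S$ to a subsystem of ${\Lambda_m}|_S$ of the shape ``effective part plus $G$'') is the paper's, but two of your key steps do not close. First, in the irrational-pencil branch you take $Q\equiv(m+1-m_0)\pi^*(-K_X)|_S-C$ and assert it is nef and big for $m\ge m_0+6$; this is unjustified and in general false, since subtracting the moving curve $C$ from a multiple of the pullback $\pi^*(-K_X)|_S$ can produce negative intersection with curves on which $\pi^*(-K_X)$ is trivial (e.g.\ $\pi$-exceptional curves meeting $C$), and nef-and-bigness of $Q$ is exactly what the vanishing in Lemma \ref{S2}(ii)--(iii) requires. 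The paper avoids this by peeling a \emph{second} copy of $M_{-m_0}$ via $m_0\pi^*(-K_X)\geq M_{-m_0}$, obtaining
$$K_S+\roundup{\mathcal{L}_m}\geq K_S+\roundup{(m-2m_0+1)\pi^*(-K_X)|_S}+{M_{-m_0}}|_S,$$
so that $Q=(m-2m_0+1)\pi^*(-K_X)|_S$ is genuinely nef and big; this is the actual source of the bound $2m_0$ in (1), and in (2), since $\varrho=\frac{1}{m_0}$, the hypothesis $\varepsilon=(m-2m_0+1)\zeta>1$ is literally the required $Q\cdot C>1$ (and also forces $m>2m_0-1$, hence nef-and-bigness). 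Your attribution is reversed: it is the irrational-pencil case, not the non-pencil case, that costs the extra $m_0$; your ``structural subtlety'' that restriction to $S$ absorbs a copy of $G$ is not real, because adjunction $K_S=(K_Y+S)|_S$ returns the $-S$, so
$$K_S+L_m=(K_Y+\roundup{(m-m_0+1)\pi^*(-K_X)})|_S+{M_{-m_0}}|_S$$
already exhibits the $+G$ summand after peeling a single copy.

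Second, in the non-irrational-pencil branch you leave the effectivity of the ``$K_S+\roundup{Q}$'' part as the genuine obstacle and propose Riemann--Roch on $S$ plus the boundedness theorem; that route is not carried out and would not give the stated explicit bounds (boundedness of $\bQ$-Fano $3$-folds is not effective, and $\chi(\OO_S)$, $K_S\cdot\roundup{Q}$ are not controlled by anything you have established). No such computation is needed: by \ref{red} and Corollary \ref{ap} one has $K_Y+\roundup{(m-m_0+1)\pi^*(-K_X)}\geq M_{-(m-m_0)}\geq 0$ as soon as $m-m_0\geq 6$ (resp.\ $m\geq 2$ when $m_0=1$), and restricting this effective divisor to the generic $S$ supplies the effective summand to which Lemma \ref{S2}(i) (Tankeev) applies --- this is precisely where the bound $m_0+6$ comes from. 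With these two corrections (second peeling for the irrational pencil, Corollary \ref{ap} for effectivity) your outline becomes the paper's proof.
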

\begin{proof} By the relation (3.2) and the surjective map (3.3), we
see ${\Lambda_m}|_S\supset |K_S+L_m|$, which will be proved to be
able to distinguish different generic irreducible elements of $|G|$.
By definition, we have $\iota=1$. Since $m_0\pi^*(-K_X)\geq
M_{-m_0}$, we may take $\varrho=\frac{1}{m_0}$.

Now one has the following:
$$\begin{array}{lll}
K_S+L_m
&=& (K_Y)|_S+\roundup{(m+1)\pi^*(-K_X)-F_{m_0}}|_S\\
&\geq & (K_Y+\roundup{(m-m_0+1)\pi^*(-K_X)})|_S+{M_{-m_0}}|_S.
\end{array}$$
Thus, if $|G|$ is not composed with an irrational pencil of curves,
$|K_S+L_m|$ can distinguish different generic irreducible elements
provided that $m-m_0\geq 6$ whenever $m_0\geq 2$ (or that $m\geq 2$
whenever $m_0=1$).

Suppose $|G|$ is composed with an irrational pencil of curves. One
has:
$$\begin{array}{lll}
K_S+L_m&\geq & K_S+\roundup{\Lm}\\
&=& K_S+\roundup{((m-m_0+1)\pi^*(-K_X)-F_{m_0})|_S}\\
&\geq& K_S+\roundup{(m-2m_0+1)\pi^*(-K_X)|_S}+{M_{-m_0}}|_S.
\end{array}$$
If $g(C)>0$, Lemma \ref{S2} (ii) implies that Assumption \ref{asum}
(2) is satisfied for $m\geq 2m_0$. If $g(C)=0$, by Lemma \ref{S2}
(iii), we need the condition $\varepsilon=(m-2m_0+1)\zeta=Q\cdot
C>1$, where we may take $Q=(m-2m_0+1)\pi^*(-K_X)|_S$.
\end{proof}

When $f$ is of type $(f_{\text{p}})$, we have the similar treatment.

\begin{prop}\label{aa2} Let $X$ be a terminal weak $\bQ$-Fano
3-fold. Assume that $P_{-m_0}\geq 2$ for some positive integer $m_0$
and that $|-m_1K_X|$ is not composed with a pencil of surfaces for
another integer $m_1\geq m_0$. Keep the same notation as in
\ref{set}. Take $|G|=|{M_{-m_1}}_{|S}|$ on a generic irreducible
element $S$ of $|M_{-m_0}|$. Let $C$ be a generic irreducible
element of $|G|$. If $f:Y\rightarrow Z$ is of type $(f_{\text{p}})$,
then Assumption \ref{asum} (2) is satisfied under one of the
following conditions:
\begin{itemize}
\item[(1)] $g(C)> 0$, $m\geq \begin{cases} \max\{m_1+6,m_0+m_1\} & m_0\geq 2\\
m_1+1, & m_0=1;
\end{cases}$

\item[(2)] $g(C)=0$, $\varepsilon>1$ and
$m\geq \begin{cases} m_1+6, & m_0\geq 2\\
m_1+1, & m_0=1.
\end{cases}$
\end{itemize}
\end{prop}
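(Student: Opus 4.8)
### Proof Plan for Proposition \ref{aa2}

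The plan is to mirror the structure of the proof of Proposition \ref{a2}, adapting it to the pencil case $(f_{\text{p}})$. The essential difference is that now $|M_{-m_0}|$ is composed with a rational pencil, so $S$ is a fiber of $f$ with $\iota = a_{m_0}$ and $M_{-m_0} \equiv a_{m_0} S$, while the base point free system $|G|$ on $S$ comes from the restriction of $M_{-m_1}$ rather than $M_{-m_0}$. First I would record the basic numerology: since $m_1 \pi^*(-K_X) \geq M_{-m_1}$ and $M_{-m_1}|_S$ is the system $|G|$ whose generic element is $C$, we may take $\varrho = \frac{1}{m_1}$ in the definition of $\zeta$ and $\varepsilon$. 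As before, the goal is to produce an inclusion ${\Lambda_m}|_S \supset |K_S + L_m|$ (via relation (3.2) and the surjection (3.3)), and then show $|K_S + L_m|$ separates different generic irreducible elements of $|G|$.

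Next I would peel off the relevant divisorial pieces. Using Corollary \ref{ap} from the Appendix to guarantee the anti-plurigenus contributions we need, I would estimate
$$K_S + L_m \geq (K_Y + \roundup{(m - m_0 + 1 - \tfrac{m_0}{\iota} \cdot 0)\pi^*(-K_X)})|_S + \text{(terms involving } M_{-m_1}|_S),$$
more precisely peeling off one copy of $M_{-m_1}|_S$ after absorbing $\frac{1}{\iota}F_{m_0}$ and $S$ into $(m+1)\pi^*(-K_X)$. The key point is that $(m+1)\pi^*(-K_X) - \frac{1}{\iota}F_{m_0} - S \equiv (m+1 - \frac{m_0}{\iota})\pi^*(-K_X)$ is nef and big once $\varepsilon > 0$, so Kawamata--Viehweg vanishing applies. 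I would then split into the two subcases according to whether $|G|$ is composed with an irrational pencil of curves.

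In the subcase where $|G|$ is \emph{not} composed with an irrational pencil, I would invoke Lemma \ref{S2}(i): once $K_S + \roundup{Q}$ is effective with one spare copy of $M_{-m_1}|_S$ available as the ``$G$'' in that lemma, the system separates generic irreducible elements. Counting the anti-pluricanonical contributions, this needs $m - m_0 \geq 6$ (i.e. $m \geq m_1 + 6$ suffices since $m_1 \geq m_0$, but I should double-check whether it is $m_0 + 6$ or $m_1 + 6$ that is forced — the $M_{-m_1}|_S$ term costs us $m_1$ rather than $m_0$, which is why $m_1$ enters) when $m_0 \geq 2$, and $m \geq m_1 + 1$ when $m_0 = 1$. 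In the subcase where $|G|$ \emph{is} composed with an irrational pencil, I would peel further to write $K_S + L_m \geq K_S + \roundup{(m - m_0 - m_1 + 1)\pi^*(-K_X)|_S} + M_{-m_1}|_S$ and apply Lemma \ref{S2}(ii) when $g(C) > 0$ (requiring $(m - m_0 - m_1 + 1)\pi^*(-K_X)|_S$ nef and big, i.e. $m \geq m_0 + m_1$) and Lemma \ref{S2}(iii) when $g(C) = 0$ (additionally requiring the degree condition $Q \cdot C > 1$, which is exactly $\varepsilon = (m - m_0 - m_1 + 1)\zeta > 1$ or a comparable quantity).

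The main obstacle I anticipate is bookkeeping the exact coefficients in the pencil case: because $M_{-m_0} \equiv a_{m_0} S$ with $a_{m_0}$ possibly large, one only gets to subtract $\frac{1}{\iota}F_{m_0} = \frac{1}{a_{m_0}}F_{m_0}$, and it must be verified that after this subtraction the residual $\bQ$-divisor is still large enough (both for the vanishing theorem and for the effectivity/degree hypotheses of Lemma \ref{S2}) while simultaneously retaining a full integral copy of $M_{-m_1}|_S$. Getting the rounding ($\roundup{\cdot}$ versus $\rounddown{\cdot}$) consistent with the integrality needed to split off $M_{-m_1}|_S$, and confirming that $m_1 + 6$ (not $m_0 + 6$) is the sharp threshold dictated by needing $K_Y + \roundup{(m - m_1 + 1)\pi^*(-K_X)} \geq M_{-(m - m_1)} \geq 0$ for $m - m_1 \geq 6$, is where care is required; the vanishing-theorem and Lemma \ref{S2} inputs themselves are routine once the divisor estimates are set up correctly.
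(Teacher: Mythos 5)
Your proposal follows essentially the same route as the paper's proof: re-using $\varrho=\frac{1}{m_1}$, reducing via (3.2)--(3.3) to showing $|K_S+L_m|$ separates generic irreducible elements of $|G|$, peeling off one copy of ${M_{-m_1}}|_S$ so that effectivity of $K_Y+\roundup{(m-m_1+1)\pi^*(-K_X)}$ (Corollary \ref{ap}, hence $m\geq m_1+6$, resp.\ $m\geq m_1+1$) handles the non-irrational-pencil case via Lemma \ref{S2}(i), and treating the irrational-pencil case with $K_S+\roundup{(m-m_0-m_1+1)\pi^*(-K_X)|_S}+{M_{-m_1}}|_S$ and Lemma \ref{S2}(ii)/(iii), exactly as in the paper. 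Your resolution of the $m_0+6$ versus $m_1+6$ question and the degree condition $\varepsilon=(m-m_0-m_1+1)\zeta>1$ coincide with the paper's thresholds, so the proposal is correct.
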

\begin{proof} First, we may re-modify our $\pi$ in \ref{set} such
that the movable part $|M_{-m_1}|$ of $|\rounddown{\pi^*(-m_1K_X)}|$
is base point free too. Since $m_1\pi^*(-K_X)\geq M_{-m_1}$, we may
take $\varrho=\frac{1}{m_1}$. Similar to the proof of Proposition
\ref{a2}, it suffices to prove that $|K_S+L_m|$ can distinguish
different generic irreducible elements of $|G|$.

For a suitable integer $m>0$, one has the following:
$$\begin{array}{lll}
K_S+L_m
&=& (K_Y)|_S+\roundup{(m+1)\pi^*(-K_X)-F_{m_0}}|_S\\
&\geq& (K_Y+\roundup{(m-m_1+1)\pi^*(-K_X)})|_S+{M_{-m_1}}|_S.
\end{array}$$
Thus, if $|G|$ is not composed with an irrational pencil of curves,
$|K_S+L_m|$ can distinguish different irreducible elements provided
that $K_Y+\roundup{(m-m_1+1)\pi^*(-K_X)}$ is effective, i.e.
$m-m_1\geq 6$ whenever $m_0\geq 2$ (or $m\geq m_1+1$ whenever
$m_0=1$).

Assume $|G|$ is composed with an irrational pencil of curves. One
has:
$$\begin{array}{lll}
K_S+L_m&\geq & K_S+\roundup{\Lm}\\
&=& K_S+\roundup{((m-m_0+1)\pi^*(-K_X)-F_{m_0})|_S}\\
&\geq& K_S+\roundup{(m-m_0-m_1+1)\pi^*(-K_X)|_S}+{M_{-m_1}}|_S.
\end{array}$$
If $g(C)> 0$, Lemma \ref{S2}(ii) implies that Assumption \ref{asum}
(2) is satisfied for $m\geq m_0+m_1$. If $g(C)=0$, by Lemma
\ref{S2}(iii), we need the condition
$\varepsilon=(m-m_0-m_1+1)\zeta=Q\cdot C>1$, where we may take
$Q=(m-m_0-m_1+1)\pi^*(-K_X)|_S$. We are done.
\end{proof}

\section{\bf Applications}

In this section, we shall present some concrete applications on the
birationality of $\varphi_{-m}$ using Theorem \ref{mb}. In practice,
we take the smallest integer $\hat{m}_0:=\hat{m}_0(X)$ with
$P_{-\hat{m}_0}\geq 2$. Such a number $\hat{m}_0$ is uniquely
determined by $X$ and $\hat{m}_0\leq 8$ by \cite[Theorem 1.1
(ii)]{C-C}.

\begin{thm}\label{3} Let $X$ be a terminal weak $\bQ$-Fano
3-fold. Assume $\dim \overline{\varphi_{-\hat{m}_0}(X)}=3$. Then
$\varphi_{-m}$ is birational onto its image for all $m\geq
4\hat{m}_0$. In particular, $|-mK_X|$ gives a birational map for
$m\geq 32$.
\end{thm}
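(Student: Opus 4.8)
The plan is to apply Theorem~\ref{mb} directly, with $m_0=\hat m_0$, $\iota=1$, and with $|G|:=|{M_{-\hat m_0}}|_S|$ on a generic irreducible element $S$ of $|M_{-\hat m_0}|$. First I would unpack the hypothesis $\dim\overline{\varphi_{-\hat m_0}(X)}=3$. It puts us in Case $(f_{\text{np}})$ (so $\iota=1$ and $S$ is a nonsingular projective surface), and it says that $\gamma=\varphi_{-\hat m_0}\circ\pi$ is generically finite onto a $3$-fold $Z$; hence for a general $S\in|M_{-\hat m_0}|$ the restricted map, which is the map attached to $|{M_{-\hat m_0}}|_S|=|G|$, is generically finite onto the surface $Z\cap H$. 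Since $|M_{-\hat m_0}|$ is already base point free, so is $|G|$, and being generically finite onto a surface it is not composed with a pencil; consequently a generic irreducible element $C$ of $|G|$ is an irreducible smooth curve with $C\sim G$ and $C^{2}=G^{2}\geq 1$ (a positive integer).

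Next I would compute the three quantities of~\ref{zeta}. From $\hat m_0\pi^*(-K_X)=M_{-\hat m_0}+F_{\hat m_0}$ with $F_{\hat m_0}\geq 0$ (equality $(2.1)$), restriction to a general $S$ gives $\hat m_0\pi^*(-K_X)|_S\geq {M_{-\hat m_0}}|_S=G\sim C$, so one may take $\varrho=\tfrac{1}{\hat m_0}$; intersecting the same inequality with the nef curve $C$ gives $\hat m_0\zeta=\hat m_0\,\pi^*(-K_X)|_S\cdot C\geq G\cdot C=C^{2}\geq 1$, i.e. $\zeta\geq\tfrac{1}{\hat m_0}$. Substituting $\iota=1$, $m_0=\hat m_0$ and $\tfrac{1}{\varrho}=\hat m_0$ into the definition of $\varepsilon$ yields $\varepsilon=(m+1-2\hat m_0)\zeta\geq\tfrac{m+1-2\hat m_0}{\hat m_0}$; thus for every $m\geq 4\hat m_0$ we get $\varepsilon\geq 2+\tfrac{1}{\hat m_0}>2$ (and $\varepsilon$ is increasing in $m$).

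It then remains to verify Assumption~\ref{asum} for all $m\geq 4\hat m_0$. Part~(1) is Proposition~\ref{a1}, whose requirement ($m\geq\hat m_0+6$ when $\hat m_0\geq 2$, and $m\geq 2$ when $\hat m_0=1$) is met because $4\hat m_0\geq\hat m_0+6$ precisely when $\hat m_0\geq 2$. Part~(2) is Proposition~\ref{a2} in Case $(f_{\text{np}})$: when $g(C)>0$ it asks $m\geq\max\{\hat m_0+6,2\hat m_0\}$ (or $m\geq 2$ if $\hat m_0=1$), and when $g(C)=0$ it asks in addition $\varepsilon>1$ — all of these hold for $m\geq 4\hat m_0$. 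Having checked both parts of Assumption~\ref{asum} and $\varepsilon>2$, Theorem~\ref{mb} yields that $\varphi_{-m}$ is birational onto its image for every $m\geq 4\hat m_0$. Finally, since $\hat m_0\leq 8$ by \cite[Theorem 1.1 (ii)]{C-C}, every $m\geq 32$ satisfies $m\geq 4\hat m_0$, which gives the last assertion.

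The step I expect to be the main obstacle is the geometric input of the first paragraph: one must argue carefully that $3$-dimensionality of the image genuinely forces $|G|$ not to be composed with a pencil and forces $C^{2}\geq 1$ (for a general $S$, so that the restriction of $\gamma$ to $S$ stays generically finite onto a surface), since everything downstream — the choice $\varrho=\tfrac1{\hat m_0}$, the bound $\zeta\geq\tfrac1{\hat m_0}$, and hence the precise cutoff $m\geq 4\hat m_0$ — hinges on these facts. The rest is bookkeeping with the numerical constants to confirm they all fit under $4\hat m_0$.
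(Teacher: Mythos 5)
Your proposal is correct and follows essentially the same route as the paper: take $|G|=|{M_{-\hat m_0}}|_S|$ with $\varrho=\tfrac{1}{\hat m_0}$, bound $\zeta\geq\tfrac{1}{\hat m_0}$, verify Assumption \ref{asum} via Propositions \ref{a1} and \ref{a2}, and conclude with Theorem \ref{mb}. The only (harmless) difference is how the bound on $\zeta$ is obtained: you use $C^2=G^2\geq 1$ from generic finiteness of $\Phi_{|G|}$ on a general $S$, while the paper uses Riemann--Roch and Clifford's theorem on $C$ (inequality (4.1)), which additionally gives $\zeta\geq\tfrac{2}{\hat m_0}$ when $g(C)>0$ and hence a slightly better cutoff in that subcase, though this is not needed for the stated bound $m\geq 4\hat m_0$.
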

\begin{proof} Keep the same notation as in \ref{set}. We have
$\iota=1$. Take $|G|=|M_{-\hat{m}_0 |S}|$ on a generic irreducible
element $S$ of $|M_{-\hat{m}_0}|$. Clearly, we may take
$\varrho=\frac{1}{\hat{m}_0}$. Since, by assumption,
$\Phi_{|M_{-\hat{m}_0 |S}|}$ maps a generic irreducible element $C$
of $|G|$ onto a curve, Riemann-Roch formula and Clifford's theorem
on $C$ gives
$$\hat{m}_0\pi^*(-K_X)\cdot C\geq
{M_{-\hat{m}_0}}_{|S}\cdot C\geq \begin{cases} 2,  &g(C)>0\\
1,& g(C)=0.
\end{cases} \eqno{(4.1)}
$$
 We prove the theorem according to the value of
$g(C)$.

Assume $g(C)>0$. We have seen $\zeta\geq \frac{2}{\hat{m}_0}$ by
inequality (4.1). By Proposition \ref{a1} and Proposition \ref{a2},
Assumption
\ref{asum} is satisfied for $$m\geq \begin{cases} \max\{\hat{m}_0+6,2\hat{m}_0\} & \hat{m}_0\geq 2\\
2, & \hat{m}_0=1.
\end{cases} $$
Take any $m\geq 3\hat{m}_0$. Then $\varepsilon\geq
2+\frac{2}{\hat{m}_0}>2$. Thus Theorem \ref{mb} implies that
$\varphi_{-m}$ is birational for all $m\geq 3\hat{m}_0$ and
$\hat{m}_0\neq 2$. For $\hat{m}_0=2$, we need to set $m\geq 8$ to
satisfy $m\geq \hat{m}_0+6$.

Assume $g(C)=0$. We have already seen $\zeta\geq
\frac{1}{\hat{m}_0}$ by inequality (4.1). Take any $m\geq
4\hat{m}_0$. Then $\varepsilon>2$. Again Propositions \ref{a1},
\ref{a2} imply that $\varphi_{-m}$ is birational for all $m\geq
4\hat{m}_0$.
\end{proof}

\begin{thm}\label{2} Let $X$ be a terminal weak $\bQ$-Fano
3-fold with $r_X>1$. Assume $\dim
\overline{\varphi_{-\hat{m}_0}(X)}=2$. Denote by $C$ a generic
irreducible element of $|G|:=|M_{-\hat{m}_0|S}|$. Then
$\varphi_{-m}$ is birational onto its image under one of the
following conditions:
\begin{itemize}
\item[(1)] $g(C)> 0$, $m\geq 6\hat{m}_0$;

\item[(2)] $g(C)=0$, $m\geq 2\hat{m}_0+2r_X$.
\end{itemize}
\end{thm}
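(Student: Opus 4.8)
\textbf{Proof proposal for Theorem \ref{2}.}

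The plan is to mimic the structure of the proof of Theorem \ref{3}, applying the key Theorem \ref{mb} after verifying Assumption \ref{asum} via Propositions \ref{a1} and \ref{a2} (we are in Case $(f_{\text{np}})$ since $\dim\overline{\varphi_{-\hat m_0}(X)}=2\geq 2$, so $\iota=1$). First I would fix $|G|=|M_{-\hat m_0|S}|$ on a generic irreducible element $S$ of $|M_{-\hat m_0}|$, and since $\hat m_0\pi^*(-K_X)\geq M_{-\hat m_0}$ we may take $\varrho=\tfrac{1}{\hat m_0}$. Because $\dim\overline{\varphi_{-\hat m_0}(X)}=2$, the image of $S$ under $\Phi_{|G|}$ is a curve, so a generic irreducible element $C$ of $|G|$ moves in a base-point-free pencil on $S$; just as in (4.1), Riemann--Roch and Clifford's theorem on $C$ give $\hat m_0\zeta = \hat m_0(\pi^*(-K_X)\cdot C)\geq M_{-\hat m_0|S}\cdot C\geq 2$ when $g(C)>0$ and $\geq 1$ when $g(C)=0$. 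Thus $\zeta\geq \tfrac{2}{\hat m_0}$ in case (1) and $\zeta\geq\tfrac{1}{\hat m_0}$ in case (2). The extra input over Theorem \ref{3} is the hypothesis $r_X>1$, which via (3.1) only gives $\zeta\geq \tfrac{1}{r_X}$; the gain in case (2) must instead come from a more careful estimate of $\zeta$ from below — this is where I expect the real work to lie.

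For case (1), with $\zeta\geq \tfrac{2}{\hat m_0}$, I would compute $\varepsilon = (m+1-\hat m_0-\hat m_0)\zeta = (m+1-2\hat m_0)\zeta$ (using $\iota=1$, $\tfrac{m_0}{\iota}=\hat m_0$, $\tfrac1\varrho=\hat m_0$). Taking $m\geq 6\hat m_0$ gives $m+1-2\hat m_0\geq 4\hat m_0+1$, so $\varepsilon\geq (4\hat m_0+1)\cdot\tfrac{2}{\hat m_0}>8>2$. Assumption \ref{asum}(1) holds by Proposition \ref{a1} for $m\geq\hat m_0+6$ (automatic when $m\geq 6\hat m_0$ and $\hat m_0\geq 2$; and $m\geq 2$ when $\hat m_0=1$), and Assumption \ref{asum}(2) holds by Proposition \ref{a2}(1) for $m\geq\max\{\hat m_0+6,2\hat m_0\}$, again implied by $m\geq 6\hat m_0$. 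Hence Theorem \ref{mb} applies and $\varphi_{-m}$ is birational for all $m\geq 6\hat m_0$.

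For case (2) ($g(C)=0$), one has only $\zeta\geq\tfrac{1}{\hat m_0}$, so $\varepsilon=(m+1-2\hat m_0)\zeta$ need not exceed $2$ for $m$ as small as $2\hat m_0+2r_X$; the remedy is to exploit that $r_X\zeta$ is closely tied to $r_X(\pi^*(-K_X)^2\cdot S)$ and to use Lemma \ref{L^2}-type integrality. Concretely, I would argue that $\zeta = (\pi^*(-K_X)|_S\cdot C)_S$ together with the pencil structure $M_{-\hat m_0|S}\equiv sC$, $C^2=0$, forces a lower bound of the shape $\zeta\geq$ (something comparable to $\tfrac{1}{\hat m_0}$ but improvable); failing a direct improvement, the intended route is surely: take $m\geq 2\hat m_0 + 2r_X$ and write $\varepsilon=(m-2\hat m_0+1)\zeta\geq (2r_X+1)\cdot\tfrac{1}{\hat m_0}$, which is $>2$ precisely when $2r_X+1>2\hat m_0$, i.e. when $r_X\geq\hat m_0$ — and since $\hat m_0\leq 8$ one would need a separate small-case analysis, or more plausibly one uses $\zeta\geq\tfrac{1}{r_X}$ from (3.1) to get $\varepsilon\geq (2r_X+1-2\hat m_0+\text{correction})\cdot\tfrac{1}{r_X}$. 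The cleanest version: since $\iota=1$ and here we are free to also invoke Proposition \ref{a2}(2) requiring $\varepsilon=(m-2\hat m_0+1)\zeta>1$ for Assumption \ref{asum}(2), and then Theorem \ref{mb} requires the stronger $\varepsilon>2$; choosing $m\geq 2\hat m_0+2r_X$ and $\zeta\geq\tfrac1{r_X}$ yields $\varepsilon\geq\tfrac{2r_X+1}{r_X}>2$, and $m\geq 2\hat m_0+2r_X\geq\hat m_0+6$ holds since $r_X\geq 2$ forces $2\hat m_0+2r_X\geq\hat m_0+\hat m_0+4\geq\hat m_0+6$ when $\hat m_0\geq 2$, with the $\hat m_0=1$ case checked directly. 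Thus Theorem \ref{mb} gives birationality of $\varphi_{-m}$ for all $m\geq 2\hat m_0+2r_X$.

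The main obstacle I anticipate is pinning down the correct lower bound for $\zeta$ in case (2): the naive bound $\zeta\geq 1/r_X$ from $(3.1)$ may or may not suffice depending on the precise interplay between $r_X$, $\hat m_0$, and the required inequality $\varepsilon>2$, and reconciling the stated threshold $m\geq 2\hat m_0+2r_X$ with the constraints of Propositions \ref{a1}, \ref{a2} (which also demand $m\geq\hat m_0+6$) will require a short but careful case check on small values of $\hat m_0$ and $r_X$.
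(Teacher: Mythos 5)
Your case (2) is essentially the paper's argument: $\zeta\geq 1/r_X$ from (3.1), $m\geq 2\hat m_0+2r_X$ gives $\varepsilon\geq(2r_X+1)/r_X>2$, and Propositions \ref{a1}, \ref{a2} plus Theorem \ref{mb} conclude; your extra check that $2\hat m_0+2r_X$ clears the thresholds $\hat m_0+6$ (resp.\ $2$) is correct and harmless. The problem is case (1). Your lower bound $\zeta\geq 2/\hat m_0$ is obtained by transplanting (4.1), but (4.1) is only valid in the setting of Theorem \ref{3}, where $\dim\overline{\varphi_{-\hat m_0}(X)}=3$ and hence $\Phi_{|G|}$ restricted to a generic $C$ is non-constant, so $\deg({M_{-\hat m_0}}|_S\cdot C)\geq 1$ (and $\geq 2$ when $g(C)>0$ by Clifford/Riemann--Roch). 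Here $\dim\overline{\varphi_{-\hat m_0}(X)}=2$, so $|G|={|M_{-\hat m_0}}_{|S}|$ is composed with a pencil of curves: $M_{-\hat m_0}|_S\equiv sC$ with $C^2=0$, the curve $C$ is \emph{contracted} by $\Phi_{|G|}$, and ${M_{-\hat m_0}}|_S\cdot C=0$. So the inequality $\hat m_0\zeta\geq {M_{-\hat m_0}}|_S\cdot C$ is vacuous and gives no positive lower bound on $\zeta$; your computation $\varepsilon\geq(4\hat m_0+1)\cdot\tfrac{2}{\hat m_0}>2$ therefore has no foundation.

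The paper fills exactly this hole with a bootstrapping (``self-optimization'') argument that you do not have: start with $m'$ large enough that $\varepsilon=(m'+1-2\hat m_0)\zeta>1$ (possible since $\zeta\geq 1/r_X>0$); then, because $g(C)>0$, the inequality of Theorem \ref{inequal} gives $\zeta\geq\frac{2g(C)-2+\varepsilon_0}{m'}\geq\frac{2}{m'}$; feed this improved $\zeta$ back in with $m'-1$ in place of $m'$, and note that the hypothesis $\varepsilon>1$ persists as long as $m'>4\hat m_0$. Descending inductively one reaches $\zeta\geq\frac{2}{4\hat m_0}=\frac{1}{2\hat m_0}$, and it is this weaker bound (not $2/\hat m_0$) that calibrates the stated threshold: for $m\geq 6\hat m_0$ one gets $\varepsilon\geq(4\hat m_0+1)\cdot\frac{1}{2\hat m_0}>2$, while Assumption \ref{asum} holds by Propositions \ref{a1} and \ref{a2}(1), so Theorem \ref{mb} applies. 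Without this iterative estimate (or some substitute lower bound for $\zeta$ valid in the pencil-of-curves situation), your proof of part (1) does not go through.
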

\begin{proof} Again we have $\iota=1$ and may take $\varrho=\frac{1}{\hat{m}_0}$.
Clearly $|G|$ is composed with a pencil of curves.

(1) Assume $g(C)>0$. If we take a sufficiently large
$m=m'>4\hat{m}_0$ such that $\varepsilon=(m'+1-2\hat{m}_0)\zeta>1$
(thus $\varepsilon_0\geq 2$), then Theorem \ref{nonv} implies
$\zeta\geq \frac{2}{m'}$. Take $m=m'-1$. Then $\varepsilon\geq
(m'-2\hat{m}_0)\frac{2}{m'}>1$. Theorem \ref{nonv} again gives
$\zeta\geq \frac{2}{m'-1}$. This means, inductively, that $\zeta\geq
\frac{2}{4\hat{m}_0}=\frac{1}{2\hat{m}_0}$.

Whenever $m\geq 6\hat{m}_0$, then Assumption \ref{asum} is naturally
satisfied by Propositions \ref{a1} and \ref{a2}. On the other hand,
$\varepsilon\geq (6\hat{m}_0+1-2\hat{m}_0)\frac{1}{2\hat{m}_0}>2$.
Theorem \ref{mb} implies that $\varphi_{m}$ is birational.

(2) Assume $g(C)=0$. Since we have $\zeta\geq \frac{1}{r_X}$, we may
take $m\geq 2\hat{m}_0+2r_X$ and then $\varepsilon\geq
(m+1-2\hat{m}_0)\zeta>2$. When $r_X>1$, Theorem \ref{mb},
Propositions \ref{a1} and \ref{a2} imply that $\varphi_m$ is
birational for all $m\geq 2\hat{m}_0+2r_X$.
\end{proof}

Especially, when $\hat{m}_0=1$, we have the following:

\begin{cor} Let $X$ be a standard $\bQ$-Fano
3-fold. Then $\varphi_{-m}$ is birational for all $m\geq 6$.
\end{cor}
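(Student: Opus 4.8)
The plan is to split according to $d:=\dim\overline{\varphi_{-\hat{m}_0}(X)}\in\{1,2,3\}$ and to feed each case into Theorem \ref{3}, Theorem \ref{2}, or its companion for the pencil case, using standardness to guarantee that we always land in the favourable ``$g(C)>0$'' branch. First I would record the two consequences of standardness used throughout: since $g(X)\geq 0$ one has $P_{-1}=g(X)+2\geq 2$, so $\hat{m}_0=1$; and a generic irreducible element $S$ of $|M_{-1}|$ is a smooth birational model of a general member of $|-K_X|$, hence $\kappa(S)\geq 0$, so $S$ is not uniruled.

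The crucial point is that $\kappa(S)\geq 0$ forces $g(C)>0$ for the relevant curve $C$ in each case. Indeed, if $|G|$ is any base point free linear system of positive dimension on $S$ and $C$ is its generic irreducible element, then $g(C)=0$ would mean that the members of $|G|$ through a general point of $S$ sweep $S$ out by rational curves, making $S$ uniruled --- impossible. I would apply this with $|G|=|M_{-1|S}|$ when $d\in\{2,3\}$ (such a $|G|$ then exists, and for $d=2$ it is composed with a pencil of curves because $S$ maps onto a curve section of the surface $\overline{\varphi_{-1}(X)}$), and with $|G|=|M_{-m_1|S}|$ for a suitable $m_1$ having $|-m_1K_X|$ not composed with a pencil when $d=1$. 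Then: if $d=3$, Theorem \ref{3} gives birationality of $\varphi_{-m}$ already for $m\geq 4$; if $d=2$, the $g(C)>0$ case of Theorem \ref{2} gives it for $m\geq 6\hat{m}_0=6$; and if $d=1$, the $g(C)>0$ part of the corresponding statement gives it for $m\geq 6$. The worst threshold is $6$, which proves the corollary.

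The case $d=1$ --- i.e. $|-K_X|$ composed with a pencil, which forces $P_{-1}=2$, $g(X)=0$ and $\iota=1$ --- is the one I expect to be the main obstacle. There one must pin down an auxiliary integer $m_1$, as small as possible, with $|-m_1K_X|$ not composed with a pencil (using $P_{-8}\geq 2$, Corollary \ref{non-pencil}, and the known lower bounds for $P_{-m}$ when $\rho(X)=1$), verify Assumption \ref{asum} via Propositions \ref{a1} and \ref{aa2}, bootstrap a lower bound on $\zeta$ exactly as in the proof of Theorem \ref{2}, and check that the condition $\varepsilon>2$ of Theorem \ref{mb} already holds for $m\geq 6$. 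In this step the hypothesis $\kappa(S)\geq 0$ is precisely what excludes the ``pencil of birationally ruled surfaces'' pathology mentioned in the introduction and what supplies $g(C)>0$ in Proposition \ref{aa2}.
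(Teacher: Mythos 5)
Your handling of the cases $\dim\overline{\varphi_{-1}(X)}=2,3$ matches the paper: $g(X)\geq 0$ gives $\hat{m}_0=1$, and $\kappa(S)\geq 0$ rules out $S$ being swept by rational members of $|G|$, so $g(C)>0$ and Theorems \ref{3} and \ref{2}(1) give birationality for $m\geq 4$ and $m\geq 6$ respectively. The genuine gap is your case $d=1$. The paper never has to treat it: it invokes \cite[Theorem 2.18]{Alex94}, which says that for a $\bQ$-Fano 3-fold with Picard number one and $g\geq 0$ the system $|-K_X|$ has no fixed part and is \emph{not} composed with a pencil of surfaces --- this is exactly where the first condition in the definition of ``standard'' ($\rho(X)=1$) enters, and it eliminates $d=1$ outright. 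You instead propose to run the machinery of Theorem \ref{1}, Proposition \ref{np} and Proposition \ref{aa2} inside the case $d=1$ and assert that ``the $g(C)>0$ part of the corresponding statement gives it for $m\geq 6$.'' No such statement exists in the paper, and the available tools do not produce the bound $6$: Proposition \ref{np} requires $r_X>1$ and only yields $m_1\geq 4$, so Theorem \ref{1} gives at best $m\geq \hat{m}_0+m_1+2r_X\geq 9$; if $r_X=1$ there is no result supplying an $m_1$ at all; and even granting an $m_1\geq 2$ (forced, since $d=1$ means $m_1>1$), the condition $\varepsilon=(m+1-\hat{m}_0/\iota-m_1)\zeta>2$ at $m=6$ would need $\zeta>1/2$, which neither $\zeta\geq 1/r_X$ nor the bootstrapping of Theorem \ref{2} is shown to provide. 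The hypothesis $\kappa(S)\geq 0$ only guarantees $g(C)>0$ within the case $d=1$; it does not exclude the case, so your sketch leaves the corollary unproved there.

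Two smaller inaccuracies in the same case: $d=1$ does not force $P_{-1}=2$ (a rational pencil can have $P_{-1}>2$), and then $\iota=P_{-1}-1$ need not be $1$, which changes the $\varepsilon$ you would have to control. The fix is simply to replace your $d=1$ discussion by the citation of Alexeev's theorem, as the paper does.
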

\begin{proof} We have $\hat{m}_0=1$. According to
\cite[Theorem 2.18]{Alex94}, $|-K_X|$ has no fixed parts and is not
composed with a pencil of surfaces. Since $X$ is standard, $S$ is
not fibred by rational curves. Thus we have $g(C)>0$. Now the result
follows directly from Theorem \ref{3} and Theorem \ref{2} (1).
\end{proof}

{}From now on within this section, we study the most difficult case:
$\dim \overline{\varphi_{-\hat{m}_0}(X)}=1$. {}First, we need to
look for a number $m_1>0$ such that $|-m_1K_X|$ is not composed with
a pencil of surfaces.

\begin{prop}\label{np} Let $X$ be a terminal weak $\bQ$-Fano 3-fold with $r_X>1$.
Then $|-m_1K_X|$ is not composed with a pencil of surfaces under one
of the following situations:
\begin{itemize}
\item[(1)] $r=2$, $m_1=4$;
\item[(2)] $r=3$, $m_1=6$;
\item[(3)] $r\geq 4$, $m_1=r+2$.
\end{itemize}
\end{prop}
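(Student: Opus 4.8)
The plan is to deduce the statement from Corollary \ref{non-pencil}: in each of the three cases it suffices to verify the numerical inequality
$$P_{-m_1}>m_1\,r_X\,(-K_X)^3+1.$$
Since $-(m+1)K_X$ is nef and big, Kawamata--Viehweg vanishing gives $P_{-m}=\chi(\OO_X(-mK_X))$ for every $m\ge 1$, and I evaluate the right-hand side by Reid's orbifold Riemann--Roch formula
$$P_{-m}=1+\frac{m(m+1)(2m+1)}{12}(-K_X)^3+\frac{m}{12}\bigl((-K_X)\cdot c_2(X)\bigr)+\sum_{Q}c_Q(-mK_X),$$
where the sum runs over the basket $\{(r_Q,b_Q)\}$ of singularities of $X$, and each local term $c_Q(D)$ depends only on the local class of the Weil divisor $D$ at $Q$ modulo Cartier divisors; in particular $c_Q(D)=0$ whenever $D$ is Cartier near $Q$. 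I shall also use $(-K_X)^3>0$ together with the known fact that $(-K_X)\cdot c_2(X)\ge 0$ for a terminal weak $\bQ$-Fano $3$-fold (see \cite{KA}).

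In case (1) ($r_X=2$, $m_1=4$) and case (2) ($r_X=3$, $m_1=6$) one has $r_X\mid m_1$, so $-m_1K_X$ is a Cartier divisor, every term $c_Q(-m_1K_X)$ vanishes, and hence
$$P_{-m_1}=1+\frac{m_1(m_1+1)(2m_1+1)}{12}(-K_X)^3+\frac{m_1}{12}\bigl((-K_X)\cdot c_2(X)\bigr)\ge 1+\frac{m_1(m_1+1)(2m_1+1)}{12}(-K_X)^3.$$
Since $\frac{m_1(m_1+1)(2m_1+1)}{12}$ equals $15>8=m_1r_X$ when $(m_1,r_X)=(4,2)$, and equals $\frac{91}{2}>18=m_1r_X$ when $(m_1,r_X)=(6,3)$, and $(-K_X)^3>0$, we obtain $P_{-m_1}>m_1r_X(-K_X)^3+1$ at once, and Corollary \ref{non-pencil} applies.

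Case (3) ($r_X=r\ge 4$, $m_1=r+2$) is the delicate one, since $r\nmid r+2$ and the basket corrections no longer disappear. Here the key observation is that $r_Q\mid r_X=r$ for every basket point $Q$, so the Weil divisors $-(r+2)K_X$ and $-2K_X$ differ by $-rK_X$, which is Cartier near every $Q$; therefore $c_Q(-(r+2)K_X)=c_Q(-2K_X)$ for all $Q$, and subtracting the Riemann--Roch expression for $P_{-2}$ cancels the whole basket sum and gives the exact identity
$$P_{-(r+2)}=P_{-2}+\frac{(r+2)(r+3)(2r+5)-30}{12}(-K_X)^3+\frac{r}{12}\bigl((-K_X)\cdot c_2(X)\bigr).$$
An elementary check shows $(r+2)(r+3)(2r+5)-30>12\,r(r+2)$ for every $r\ge 4$; combined with $(-K_X)^3>0$, $(-K_X)\cdot c_2(X)\ge 0$, and the lower bound $P_{-2}\ge 1$ for weak $\bQ$-Fano $3$-folds (see \cite{C-C}), this gives $P_{-(r+2)}>r(r+2)(-K_X)^3+1=m_1r_X(-K_X)^3+1$, so Corollary \ref{non-pencil} again finishes the proof.

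The main obstacle is the last case, and more precisely the input $P_{-2}\ge 1$: without it the crude estimates $(-K_X)^3\ge r_X^{-3}$ and $\sum_Q\frac{r_Q^2-1}{r_Q}\le 24$ are too weak, and one is forced to run the full orbifold Riemann--Roch bookkeeping over all admissible baskets --- using the identity $(-K_X)^3=2g-2+\sum_Q\frac{b_Q(r_Q-b_Q)}{r_Q}$ with $g=P_{-1}-2\ge -2$ --- in order to exclude the small-volume configurations. By contrast, cases (1) and (2) are immediate precisely because the chosen $m_1$ clears every denominator and kills all the correction terms.
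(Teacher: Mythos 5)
Your cases (1) and (2) are correct, and your route there is genuinely different from (and cleaner than) the paper's: since $r_X\mid m_1$, the divisor $-m_1K_X$ is Cartier, every basket correction in Reid's Riemann--Roch vanishes, and together with $\chi(\OO_X)=1$, the vanishing $P_{-m}=\chi(\OO_X(-mK_X))$ and $(-K_X)\cdot c_2(X)\ge 0$ (for general terminal weak $\bQ$-Fano 3-folds this is \cite{KMMT}; \cite{KA} only covers the $\rho=1$ Fano case, so adjust the citation) you get $P_{-m_1}\ge 1+\frac{m_1(m_1+1)(2m_1+1)}{12}(-K_X)^3>1+m_1r_X(-K_X)^3$, and Corollary \ref{non-pencil} applies with no ``$+1$'' subtlety. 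The paper instead bounds the linear and constant terms $A$ and $C(t)$ of $Q_t(n)$ using $Q_t(n)\ge 0$ for $n\ge 0$ and the Serre-duality relation $Q_t(n)=-P_{1-nr-t}\le 0$ for $n\le -1$, optimizing over the residue $t$; your computation in (1),(2) keeps the constant term $\chi(\OO_X)=1$ explicit, which is exactly what makes the conclusion immediate. Your observation in case (3) that $c_Q(-(r+2)K_X)=c_Q(-2K_X)$ because every basket index divides $r_X$ is also legitimate.

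The gap is the key numerical input of case (3): the claim ``$P_{-2}\ge 1$ for weak $\bQ$-Fano 3-folds (see \cite{C-C})'' is not available and cannot be cited from \cite{C-C}. On the contrary, \cite[Theorem 3.5, Table A]{C-C} is precisely a classification of the 23 formal baskets with $P_{-2}=0$, and the Appendix of the present paper (proof of Corollary \ref{ap}, Case 2) treats $P_{-2}=0$ as a genuine possibility; the optimal known non-vanishing statements are $P_{-6}>0$ and $P_{-8}\ge 2$, not $P_{-2}>0$. With only $P_{-2}\ge 0$, your identity gives $P_{-(r+2)}>r(r+2)(-K_X)^3$ but misses the ``$+1$'' demanded by Corollary \ref{non-pencil}: the excess is $\frac{2r^3+3r^2+13r}{12}(-K_X)^3$, which need not exceed $1$ since the only a priori lower bound is $(-K_X)^3\ge r_X^{-3}$ (for $r=4$ you would need $(-K_X)^3>1/19$ while only $1/64$ is guaranteed). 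So, exactly as you anticipate in your last paragraph, one must either prove that $P_{-2}=0$ cannot occur when $r_X\ge 4$ (this would itself require an argument, e.g.\ inspecting the baskets of Table A, and is not something you may quote) or run the basket bookkeeping in the $P_{-2}=0$ cases; as written, case (3) is not proved. Note that the paper's own proof of (3) avoids $P_{-2}$ entirely, instead extracting a lower bound for the coefficient $A$ by letting $t$ range over $2\le t<r$ and choosing $t$ near $r/2$, which is the ingredient your shortcut would need to replace.
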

\begin{proof} Let $\rho: V\rightarrow X$ be a resolution of
singularities of $X$. According to Lemma \ref{Hn} and
Kawamata-Viehweg Vanishing theorem, for all positive integer $m$,
$$\begin{array}{lll}
P_{-m}:=P_{-m}(X)&=&h^0(V,K_V+\roundup{(m+1)\rho^*(-K_X)})\\
&=&\chi(V, \OO_V(K_V+\roundup{(m+1)\rho^*(-K_X)})). \end{array}$$
Write $K_V=\rho^*(K_X)+\Delta$ where $\Delta$ is an effective
$\bQ$-divisor and is $\rho$-exceptional. Thus $\rho(\Delta)$ is the
set of singularities on $X$. One gets $-K_V=\rho^*(-K_X)-\Delta$.
For any given $m>0$, write $m=nr+t$ with $n\geq 0$, $0\leq t<r=r_X$.
By Riemann-Roch formula on $V$ (\cite[P. 437]{Hartshorne}) and
setting $D:=K_V+\roundup{(m+1)\rho^*(-K_X)}$, one has:
$$\begin{array}{lll}
P_{-m}&=& \frac{1}{12}D\cdot (D-K_V)\cdot
(2D-K_V)+\frac{1}{12}D\cdot c_2(V)+\chi(\OO_V)\\
&=& \frac{1}{12}m(m+1)(2m+1)(-K_X)^3+A\cdot n+C(t)
 \end{array}$$
where $A:=\frac{r}{12}\rho^*(-K_X)\cdot c_2(V)$ and $C(t)$ is a
constant only depending on $t$. Fix a number $t<r$,
$Q_t(n):=P_{-(nr+t)}$ can be viewed as a polynomial in terms of $n$.
Then we have
$$Q_t(n)=\hat{Q}_t(n)+A\cdot n+C(t)$$
where $\hat{Q}_t(n)=\frac{1}{12}(nr+t)(nr+t+1)(2nr+2t+1)(-K_X)^3$.

For $n\geq 1$ or $n=0$ and $t\geq 2$, one has:
$$Q_t(n)\geq 0 \eqno{(4.2)}$$

For $n\leq -1$, by \cite[Corollary 5.25]{K-M},
$\OO_X((nr+t)K_X):=\omega_X^{[nr+t]}$ is Cohen-Macaulay and thus
Serre duality gives:
$$\begin{array}{lll}
\chi(X, \OO_X((nr+t)K_X))&=&-\chi(X, \mathcal{H}{\it
om}(\OO_X((nr+t)K_X),
\omega_X))\\
&=& -\chi(X, \OO_X((1-nr-t)K_X)) \end{array}$$ where the last
equality is due to the fact that $\mathcal{H}{\it
om}(\OO_X((nr+t)K_X), \omega_X)$ is reflexive (cf. \cite[P. 150,
Lemma 1.1.12]{OSS}). Now the vanishing theorem \cite[Theorem
1-2-5]{KMM} gives:
$$\begin{array}{lll}
Q_t(n)&=&\chi(\OO_X((nr+t)K_X))\\
&=&-\chi(\OO_X((1-nr-t)K_X))=-P_{1-nr-t} \leq 0
\end{array}\eqno{(4.3)}
$$

Let us estimate the lower bounds of both $A$ and $C(t)$. For any
$r$, $t$, one has, by inequalities (4.2) and (4.3),
$$A+C(t)\geq -\hat{Q}_t(1)\geq -\frac{1}{12}(r+t)(r+t+1)(2r+2t+1)(-K_X)^3 \eqno{(4.4)}$$
$$A-C(t)\geq \hat{Q}_t(-1)\geq \frac{1}{12}(-r+t)(-r+t+1)(-2r+2t+1)(-K_X)^3 \eqno{(4.5)}$$
which imply:
$$A\geq  -\frac{1}{12}(2r^2+6t^2+6t+1)r(-K_X)^3.  \eqno{(4.6)}$$
\medskip

{\bf Case 1}. Assume $t\geq 2$.

Clearly, one has $r\geq 3$. Then $Q_t(0)=P_{-t}\geq 0$ gives:
$$C(t)\geq -\frac{1}{12}t(t+1)(2t+1)(-K_X)^3.$$
Thus (4.5) implies:
$$\begin{array}{lll}
A&\geq &\frac{1}{12}\{-2r^2+(6t+3)r-(6t^2+6t+1)\}r(-K_X)^3\\
&=& \frac{1}{12}\{-6t^2+(6r-6)t+(-2r^2+3r-1)\}r(-K_X)^3
\end{array}$$ for all $2\leq t<r$. Noting that $A$ is a constant
independent of $t$, one should have:
$$A\geq \begin{cases}
\frac{1}{12}(-\frac{1}{2}r^2+\frac{1}{2})r(-K_X)^3, & \text{taking }  t=\frac{r-1}{2} \text{ when $r\geq 5$ is odd}\\
\frac{1}{12}(-\frac{1}{2}r^2-1)r(-K_X)^3, & \text{taking }
t=\frac{r}{2} \text{ when $r\geq 4$ is even}.
\end{cases}$$
With the above inequalities, we are able to bound $P_{-m}$ from
below. In fact, we have
$$P_{-(r+2)}=Q_2(1)\geq \frac{1}{8}(r^2+10r+24)r(-K_X)^3.$$
Clearly, $P_{-(r+2)}>(r+2)r(-K_X)^3+1$ whenever $r\geq 4$. Thus
Lemma \ref{non-pencil} implies that $|-(r+2)K_X|$ is not composed
with a pencil when $r\geq 4$ and this proves (3).
\medskip

%

{\bf Case 2}. Assume $t=0$.

Inequalities (4.4) and (4.6) give:
$$P_{-nr}\geq \frac{1}{12}(n^2-1)(2nr+3)r^2(-K_X)^3.$$
Clearly $P_{-2r}>2r\cdot r(-K_X)^3+1$ for $r=2,3$. Thus Lemma
\ref{non-pencil} implies that $|-2rK_X|$ is not composed with a
pencil when $r=2$ or $3$. This proves (1) and (2).

Besides, it is easy to get the lower bound of $P_{-(nr+t)}$ for a
given pair $(n,t)$.
\end{proof}

\begin{thm}\label{1} Let $X$ be a terminal weak $\bQ$-Fano
3-fold with $r_X>1$. Assume $\dim
\overline{\varphi_{-\hat{m}_0}(X)}=1$ and $|-m_1K_X|$ is not
composed with a pencil for another integer $m_1>0$. Then
$\varphi_{-m}$ is birational onto its image for all $m\geq
\hat{m}_0+m_1+2r_X$.
\end{thm}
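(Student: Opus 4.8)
The plan is to apply the key theorem (Theorem \ref{mb}) after fitting the present data into the framework of Section 3. Keep the notation of \ref{set}. Since $\dim\overline{\varphi_{-\hat m_0}(X)}=1$ we are in Case $(f_{\text{p}})$, so $\iota=a_{\hat m_0}=P_{-\hat m_0}-1\ge 1$ and hence $\frac{\hat m_0}{\iota}\le\hat m_0$. Because $|-m_1K_X|$ is not composed with a pencil of surfaces, $P_{-m_1}\ge 2$, and thus $m_1\ge\hat m_0$ (in fact $m_1>\hat m_0$, since $\dim\overline{\varphi_{-m_1}(X)}\ge 2$ while $\dim\overline{\varphi_{-\hat m_0}(X)}=1$), so Proposition \ref{aa2} applies with $m_0=\hat m_0$. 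After re-modifying $\pi$ so that $|M_{-m_1}|$ is also base point free, I would put $|G|:=|{M_{-m_1}}_{|S}|$ on a generic irreducible element $S$ of $|M_{-\hat m_0}|$ and let $C$ be a generic irreducible element of $|G|$. Since $m_1\pi^*(-K_X)\ge M_{-m_1}$, one may take $\varrho=\tfrac1{m_1}$, so $\tfrac1\varrho=m_1$.

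The second step is to verify Assumption \ref{asum} for every $m\ge\hat m_0+m_1+2r_X$. Part (1) is handled by Proposition \ref{a1}: when $\hat m_0\ge 2$ we have $m_1\ge 2$ and $r_X\ge 2$, so $m\ge\hat m_0+m_1+2r_X\ge\hat m_0+6$; when $\hat m_0=1$ trivially $m\ge 2$. Part (2) is handled by Proposition \ref{aa2}: in the branch $g(C)>0$ its numerical threshold is $\max\{m_1+6,\hat m_0+m_1\}$ (resp.\ $m_1+1$ when $\hat m_0=1$), which is dominated by $\hat m_0+m_1+2r_X$ because $\hat m_0+2r_X\ge 6$; in the branch $g(C)=0$ one needs the same threshold together with $\varepsilon>1$, and $\varepsilon>1$ will follow from the next step.

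The third step estimates $\varepsilon$. Using $\zeta\ge\frac1{r_X}$ from (3.1), $\frac{\hat m_0}{\iota}\le\hat m_0$ and $\frac1\varrho=m_1$,
$$\varepsilon=\Big(m+1-\tfrac{\hat m_0}{\iota}-\tfrac1\varrho\Big)\zeta\ \ge\ (m+1-\hat m_0-m_1)\,\zeta\ \ge\ \frac{m+1-\hat m_0-m_1}{r_X}.$$
For $m\ge\hat m_0+m_1+2r_X$ this gives $\varepsilon\ge\frac{2r_X+1}{r_X}=2+\frac1{r_X}>2$; in particular $\varepsilon>1$, as used above. Hence Assumption \ref{asum} holds and $\varepsilon>2$, so Theorem \ref{mb} gives the birationality of $\varphi_{-m}$, which is the assertion of the theorem.

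I do not anticipate a genuine obstacle here: this is a direct, essentially bookkeeping, application of the machinery of Section 3, and no bootstrapping of $\zeta$ (as in the proof of Theorem \ref{2}) is required, since the term $2r_X$ in the hypothesis already absorbs the denominator $r_X$ in the trivial estimate $\zeta\ge\frac1{r_X}$. The one point requiring care is to confirm that the single uniform bound $m\ge\hat m_0+m_1+2r_X$ simultaneously beats all the numerical thresholds coming from Propositions \ref{a1} and \ref{aa2}, in both the $g(C)>0$ and $g(C)=0$ cases; the facts that make this work are $r_X\ge 2$ (so $2r_X\ge 4$) and $m_1\ge\hat m_0$.
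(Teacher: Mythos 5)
Your proposal is correct and follows essentially the same route as the paper's own proof: re-modify $\pi$ so that $|M_{-m_1}|$ is base point free, take $|G|=|{M_{-m_1}}_{|S}|$ with $\varrho=\tfrac{1}{m_1}$ and $\zeta\geq\tfrac{1}{r_X}$, verify Assumption \ref{asum} via Propositions \ref{a1} and \ref{aa2}, and conclude from Theorem \ref{mb} using $\varepsilon\geq(m+1-\hat{m}_0-m_1)\zeta>2$. Your bookkeeping of the numerical thresholds (using $r_X\geq 2$ and $m_1\geq\hat{m}_0$) is exactly what the paper leaves implicit when it says the bound $m\geq\hat{m}_0+m_1+2r_X$ ``naturally satisfies the above requests.''
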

\begin{proof} By definition, we have $\iota\geq 1$ and $m_1\geq
\hat{m}_0$.

Re-modify our original $\pi$ in \ref{set} such that the movable part
$M_{-m_1}$ of the linear system $|\rounddown{\pi^*(-m_1K_X)}|$ is
base point free too. Pick a generic irreducible element $S$ of
$|M_{-\hat{m}_0}|$ and take $|G|:=|M_{-m_1|S}|$. Then we have
$\varrho=\frac{1}{m_1}$. Recall that we have seen $\zeta\geq
\frac{1}{r_X}$.

Assume, for an integer
$$m\geq \begin{cases} \max\{m_1+6,\hat{m}_0+m_1\} & \hat{m}_0\geq 2\\
m_1+1, & \hat{m}_0=1,
\end{cases}$$ one has already $\varepsilon>1$.
Proposition \ref{a1} and Proposition \ref{aa2} imply that Assumption
\ref{asum} is satisfied. In practice, take any $m\geq
\hat{m}_0+m_1+2r$, which naturally satisfies the above requests.
Furthermore $\varepsilon\geq (m+1-\hat{m}_0-m_1)\zeta>2$.
Consequently, Theorem \ref{mb} implies that $\varphi_{-m}$ is
birational for $m\geq \hat{m}_0+m_1+2r_X$.
\end{proof}

\begin{cor}\label{11} Let $X$ be a terminal weak $\bQ$-Fano
3-fold with $r_X>1$. Assume $\dim
\overline{\varphi_{-\hat{m}_0}(X)}=1$. Then $\varphi_{-m}$ is
birational for
$$ m\geq \begin{cases} \hat{m}_0+8, & r=2;\\
\hat{m}_0+12,& r=3;\\
\hat{m}_0+3r+2,& r\geq 4.
\end{cases}$$
\end{cor}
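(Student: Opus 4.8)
The plan is to read Corollary~\ref{11} directly off from Theorem~\ref{1} by feeding in the non-pencil values of $m_1$ produced in Proposition~\ref{np}. Recall that Theorem~\ref{1} asserts: if $\dim\overline{\varphi_{-\hat{m}_0}(X)}=1$ and $|-m_1K_X|$ is not composed with a pencil for some integer $m_1>0$, then $\varphi_{-m}$ is birational onto its image for all $m\geq \hat{m}_0+m_1+2r_X$. So it suffices, for each value of $r:=r_X>1$, to substitute the best available $m_1$ supplied by Proposition~\ref{np}.

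Concretely, I would split into the three cases of Proposition~\ref{np}. For $r=2$, Proposition~\ref{np}(1) says $|-4K_X|$ is not composed with a pencil, so we take $m_1=4$ and Theorem~\ref{1} yields birationality of $\varphi_{-m}$ for $m\geq \hat{m}_0+4+2\cdot 2=\hat{m}_0+8$. For $r=3$, Proposition~\ref{np}(2) lets us take $m_1=6$, whence Theorem~\ref{1} gives the range $m\geq \hat{m}_0+6+2\cdot 3=\hat{m}_0+12$. For $r\geq 4$, Proposition~\ref{np}(3) lets us take $m_1=r+2$, so Theorem~\ref{1} gives $m\geq \hat{m}_0+(r+2)+2r=\hat{m}_0+3r+2$. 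These three bounds are exactly those in the statement, so the corollary follows.

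The only point worth checking is the compatibility condition $m_1\geq \hat{m}_0$ that is invoked inside the proof of Theorem~\ref{1}: since $|-m_1K_X|$ not being composed with a pencil forces its image to have dimension at least two, in particular $P_{-m_1}\geq 2$, and the minimality of $\hat{m}_0$ among integers with $P_{-\hat{m}_0}\geq 2$ then forces $m_1\geq \hat{m}_0$ automatically for each of the three choices above. Apart from this routine check there is no genuine obstacle here: Corollary~\ref{11} is pure bookkeeping layered on top of Theorem~\ref{1} and Proposition~\ref{np}, with all the real work---the Riemann--Roch and Serre duality estimates on a resolution of $X$ in Proposition~\ref{np}, and the verification of Assumption~\ref{asum} via Propositions~\ref{a1} and~\ref{aa2} inside Theorem~\ref{1}---already done in those results.
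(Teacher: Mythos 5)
Your proposal is correct and is essentially identical to the paper's own proof: the paper also sets $m_1=2r$ for $r=2,3$ and $m_1=r+2$ for $r\geq 4$ and cites Proposition~\ref{np} together with Theorem~\ref{1}, yielding exactly the bounds $\hat{m}_0+8$, $\hat{m}_0+12$, $\hat{m}_0+3r+2$. Your extra remark that $m_1\geq\hat{m}_0$ follows from $P_{-m_1}\geq 2$ and the minimality of $\hat{m}_0$ is a valid (if implicit in the paper) sanity check.
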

\begin{proof}
Set
$$m_1:=\begin{cases}  2r,& \text{if } r=2,3;\\
r+2,& \text{otherwise}.
\end{cases}$$
The statement follows directly from Proposition \ref{np} and Theorem
\ref{1}.
\end{proof}

If one does not request the ``stable'' birationality, then the
slightly better result can be obtained as follows.

\begin{thm} Let $X$ be a weak $\bQ$-Fano 3-fold with $r_X>1$. Then
$\varphi_{-(\hat{m}_0+3r-1)}$ is birational onto its image.
\end{thm}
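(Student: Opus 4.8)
The plan is to apply Theorem \ref{mb} at the single value $m=\hat{m}_0+3r-1$ (with $r=r_X$), treating separately the three cases $d:=\dim\overline{\varphi_{-\hat{m}_0}(X)}\in\{1,2,3\}$ and recovering the few steps lost relative to Corollary \ref{11} by squeezing out every available lower bound for the key quantity $\zeta$: the Clifford-type inequality of the shape (4.1), the integrality $r\zeta\in\ZZ_{>0}$ coming from Lemma \ref{L^2}, the self-optimization inequality of Theorem \ref{inequal}, and the sharpened index bound implicit in Proposition \ref{np}. I would record that bound first: since $|-m_1K_X|$ is not composed with a pencil for $m_1=4,6,r+2$ according as $r=2,3,\geq 4$, one gets $P_{-m_1}\geq 3$, hence $\hat{m}_0\leq m_1$; in particular $\hat{m}_0\leq r+2$ when $r\geq 4$. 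This is precisely the arithmetic absent from the ``stable'' statement.

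For $d=3$ and $d=2$ I would rerun the proofs of Theorems \ref{3} and \ref{2} with $|G|=|{M_{-\hat{m}_0}}_{|S}|$, $\iota=1$, $\varrho=1/\hat{m}_0$, so that at $m=\hat{m}_0+3r-1$ one has $\varepsilon=(3r-\hat{m}_0)\zeta$. When $g(C)>0$, the Clifford bound gives $\zeta\geq 2/\hat{m}_0$, and $\varepsilon\geq 2(3r-\hat{m}_0)/\hat{m}_0>2$ follows from $\hat{m}_0\leq r+2$ by a one-line check. When $g(C)=0$, the integrality splits off two subcases: if $\zeta\geq 2/r$ then $\varepsilon=(3r-\hat{m}_0)\cdot(2/r)>2$ at once, while if $\zeta=1/r$ then $\hat{m}_0\zeta\geq 1$ forces $\hat{m}_0\geq r$, and one concludes by feeding $\zeta$ through the self-optimization loop of Theorem \ref{inequal}(ii). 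Since $m\geq\hat{m}_0+6$, Assumption \ref{asum} is supplied by Propositions \ref{a1} and \ref{a2}, so Theorem \ref{mb} closes these cases, the finitely many borderline small-$r$ configurations being checked by hand against the explicit lower bounds for $P_{-m}$ obtained in the proof of Proposition \ref{np} and in the Appendix.

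The substantial case is $d=1$. I would take $m_1$ as in Proposition \ref{np}, re-modify $\pi$ so that $|M_{-m_1}|$ is base point free, and put $|G|=|{M_{-m_1}}_{|S}|$ on a generic irreducible element $S$ of $|M_{-\hat{m}_0}|$; then $\iota=P_{-\hat{m}_0}-1\geq 1$, $\varrho=1/m_1$, and $\varepsilon=(m+1-\hat{m}_0/\iota-m_1)\zeta$. The crude estimate $\zeta\geq 1/r$ gives only $m\geq\hat{m}_0+m_1+2r$, i.e.\ Corollary \ref{11}; the saving comes from three sources, all of which I would exploit in parallel: keep the better bound $\hat{m}_0/\iota\leq\hat{m}_0/2$ whenever $P_{-\hat{m}_0}\geq 3$; note that if $|G|$ is not composed with an irrational pencil on $S$ then $\varrho$ may be taken $\geq 2/m_1$, halving the $1/\varrho$ term; and, when $g(C)>0$, run the bootstrap of Theorem \ref{inequal}, which by descending inductively from a large $m'$ drives $\zeta$ up from $1/r$ to roughly $2/(\hat{m}_0+m_1)$. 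Combining whichever of these applies with $\hat{m}_0\leq m_1\leq r+2$ one reaches $\varepsilon>2$ at $m=\hat{m}_0+3r-1$; Assumption \ref{asum} is then given by Propositions \ref{a1} and \ref{aa2} after checking $\hat{m}_0+3r-1\geq\max\{m_1+6,\hat{m}_0+m_1\}$, and Theorem \ref{mb} concludes.

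The main obstacle will be the $d=1$ case with $C$ rational and $\zeta$ at its minimum $1/r$, the one configuration where neither the Clifford bound nor the bootstrap of Theorem \ref{inequal} gains anything: here one is forced to combine the integrality $r\zeta\in\ZZ$, the refined index bound $\hat{m}_0\leq m_1$, and possibly a direct look at the explicit estimates for $P_{-(nr+t)}$ from the proof of Proposition \ref{np}, and a short list of small-index cases (chiefly $r=2,3$ with $\hat{m}_0$ small and $|G|$ composed with an irrational pencil) will likely have to be settled individually.
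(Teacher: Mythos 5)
There is a genuine gap, and it sits exactly where you flag it: your plan is to force $\varepsilon>2$ in Theorem \ref{mb} at the single value $m=\hat{m}_0+3r-1$, but at that value the inequality is provably out of reach when $\zeta$ attains its minimum. In the case $\dim\overline{\varphi_{-\hat{m}_0}(X)}=1$ with $|G|=|{M_{-m_1}}_{|S}|$, $\varrho=1/m_1$ and $\iota\geq 1$, one has $\varepsilon=(m+1-\hat{m}_0/\iota-m_1)\zeta$, and with the only guaranteed bound $\zeta=1/r$ this gives $\varepsilon=(3r-m_1)/r=1$ for $r=2,3$ ($m_1=2r$) and $\varepsilon=(2r-2)/r<2$ for $r\geq 4$ ($m_1=r+2$), for \emph{every} $r$, not just a short list of small indices. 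None of your proposed rescues repairs this: the self-improvement of Theorem \ref{inequal} is vacuous for $g(C)=0$, since condition (ii) already demands $\varepsilon>2$ (the argument would be circular); the claim that $\varrho$ may be taken $\geq 2/m_1$ when $|G|$ is \emph{not} composed with an irrational pencil is backwards --- the factor $s\geq 2$ in the proof of Lemma \ref{S2} appears precisely when $|G|$ is composed with a pencil, while in the non-pencil case the generic irreducible element is the whole moving part and only $\varrho=1/m_1$ is available; and $\hat{m}_0/\iota\leq\hat{m}_0/2$ requires $P_{-\hat{m}_0}\geq 3$, which need not hold. The case $\dim\overline{\varphi_{-\hat{m}_0}(X)}=2$, $g(C)=0$ has the same defect: there $C^2=0$, so the Clifford-type bound (4.1) gives nothing, $\zeta=1/r$ is possible, and $\varepsilon=(3r-\hat{m}_0)\zeta\leq 2$ as soon as $\hat{m}_0\geq r$ (e.g.\ $r=2$, $\hat{m}_0\in\{2,3,4\}$). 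So the proposal does not prove the theorem, and the obstruction is structural rather than a matter of finitely many cases to check by hand.

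The paper's proof avoids $\zeta$, $g(C)$ and the trichotomy on $\dim\overline{\varphi_{-\hat{m}_0}(X)}$ altogether by working at the surface level. Separation of distinct generic irreducible elements of $|M_{-\hat{m}_0}|$ needs only $P_{-(m-\hat{m}_0)}=P_{-(3r-1)}>0$ (Corollary \ref{ap} for $r\geq 3$, and the estimates behind Proposition \ref{np} give $P_{-5}>0$ for $r=2$), so by the surjection (3.3) it suffices to show that $|K_S+\roundup{\mathcal{L}_m}|$ is birational on a generic $S$. Now $\mathcal{L}_m\equiv (m+1-\hat{m}_0/\iota)\pi^*(-K_X)|_S\geq 3r\,\pi^*(-K_X)|_S=3L$, where $L:=r\pi^*(-K_X)|_S$ is a nef and big \emph{Cartier} divisor on $S$ with $L^2\geq r\geq 2$ by Lemma \ref{L^2}, and Reider's theorem then gives the birationality of $|K_S+3L|$. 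This use of the Cartier index through Reider's theorem on $S$, rather than a further squeeze of the curve-level quantity $\zeta$, is the idea your proposal is missing and the reason the statement holds uniformly with no case division.
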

\begin{proof} Take $m=\hat{m}_0+3r-1$. When $r_X\geq 3$, we have
$P_{-(m-\hat{m}_0)}>0$ by Corollary \ref{ap}. When $r_X=2$, we have
$P_{-(m-\hat{m}_0)}=P_{-5}>0$ by Proposition \ref{np}. Thus the
proof of Proposition \ref{a1} implies that
$|K_Y+\roundup{(m+1)\pi^*(-K_X)}|$ can distinguish different
irreducible elements of $|M_{-\hat{m}_0}|$. Thus we only need to
prove the birationality of $|K_S+\roundup{\mathcal{L}_m}|$ for a
generic $S$. Now since
$$\mathcal{L}_m= 3r\pi^*(-K_X)|_S=3L$$
and $L:=r\pi^*(-K_X)|_S$ is a nef and big Cartier divisor on $S$
with $L^2\geq r\cdot r(\pi^*(-K_X)^2\cdot S)\geq 2$ (see Lemma
\ref{L^2}), Reider's theorem (\cite{Reider}) says that $|K_S+3L|$
gives a birational map. We are done.
\end{proof}

\section{\bf Appendix: Anti-plurigenus}

Here we use all those formulae and inequalities and keep the same
notation as in \cite[Section 2]{C-C}. The following result was used
to prove our main results in this paper.

\begin{cor}\label{ap} Let $X$ be a weak $\bQ$-Fano 3-fold. Then
$P_{-m}>0$ for all $m\geq 6$.
\end{cor}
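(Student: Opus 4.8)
The plan is to prove $P_{-m}>0$ for all $m\geq 6$ on a weak $\bQ$-Fano 3-fold $X$ by reducing to a Riemann--Roch computation on a resolution and then analysing the resulting polynomial in $m$ together with the correction terms, exactly in the spirit of the proof of Proposition \ref{np}. First I would fix a resolution $\rho:V\to X$ with $K_V=\rho^*(K_X)+\Delta$ for an effective $\rho$-exceptional $\bQ$-divisor $\Delta$, so that by Lemma \ref{Hn} and Kawamata--Viehweg vanishing one has
$$P_{-m}=h^0\bigl(V,K_V+\roundup{(m+1)\rho^*(-K_X)}\bigr)=\chi\bigl(V,\OO_V(K_V+\roundup{(m+1)\rho^*(-K_X)})\bigr).$$
Writing $m=nr+t$ with $r=r_X$, $n\geq 0$, $0\leq t<r$, the singular Riemann--Roch / plurigenus formula gives
$$P_{-m}=Q_t(n)=\widehat{Q}_t(n)+A\cdot n+C(t),$$
where $\widehat{Q}_t(n)=\tfrac{1}{12}(nr+t)(nr+t+1)(2nr+2t+1)(-K_X)^3$, $A=\tfrac{r}{12}\rho^*(-K_X)\cdot c_2(V)$, and $C(t)$ depends only on $t$. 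This is precisely the setup already carried out inside Proposition \ref{np}, so I may quote those intermediate identities and the derived inequalities (4.2)--(4.6).

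Next I would exploit the two kinds of constraints used there: the effectivity bounds $Q_t(n)\geq 0$ for $n\geq 1$, or $n=0$ with $t\geq 2$ (inequality (4.2)), and the Serre-duality bounds $Q_t(n)=-P_{1-nr-t}\leq 0$ for $n\leq -1$ (inequality (4.3)), which together pin $A$ and $C(t)$ from below in terms of $(-K_X)^3>0$. The strategy is then a case division on $r=r_X$. For small $r$ (say $r=1$, where $m\geq 6$ clearly forces $n\geq 6$ and positivity is immediate; $r=2,3$), the values $m=6,7,\dots$ correspond to $n\geq 1$ or to $n=0$ with $t\geq 2$, so (4.2) already gives $P_{-m}\geq 0$; to upgrade to strict positivity I would use that $(-K_X)^3>0$ makes $\widehat{Q}_t(n)$ strictly positive for $n\geq 1$, and handle the finitely many residual pairs $(n,t)$ with $n=0$ by the explicit lower bounds for $P_{-t}$, $t\geq 2$, noting $t\geq 6/r$ when $m\geq 6$. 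For larger $r$, I would feed the lower bounds for $A$ obtained in Case 1 of Proposition \ref{np} (the values at $t=\tfrac{r-1}{2}$ or $t=\tfrac{r}{2}$) into $Q_t(n)=\widehat{Q}_t(n)+An+C(t)$, observing that $m\geq 6$ with $r$ large means $n=0$ and $t=m\geq 6$, whence $P_{-m}=C(m)\geq -\tfrac{1}{12}m(m+1)(2m+1)(-K_X)^3$ from below is the wrong sign, so instead I would directly use $Q_m(0)=P_{-m}\geq 0$ from (4.2) since $t=m\geq 6\geq 2$, and then argue strict positivity from the boundedness of $(-K_X)^3$ away from $0$ combined with the integrality of $P_{-m}$.

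The main obstacle I anticipate is converting the nonstrict inequality $P_{-m}\geq 0$ coming from (4.2) into the strict $P_{-m}>0$ uniformly in $m\geq 6$ and in all weak $\bQ$-Fano $X$: the polynomial $Q_t(n)$ a priori could vanish for isolated small parameters, and the correction term $A$ is not individually controlled but only bounded below. I expect this is resolved by combining three inputs — (i) for $n\geq 1$ the term $\widehat{Q}_t(n)$ is already bounded below by a positive multiple of $(-K_X)^3$, dwarfing any negative contribution of $An+C(t)$ once one plugs in the lower bound (4.6) for $A$; (ii) for $n=0$ one has only finitely many relevant $t$ and the statement becomes $P_{-t}>0$ for $t\geq 6$, which for each fixed $t$ is an explicit positive quantity in $(-K_X)^3$ after substituting the $r$-dependent bound for $A$; (iii) $P_{-m}\in\ZZ$, so $P_{-m}\geq 0$ plus $P_{-m}\neq 0$ gives the claim, and $P_{-m}\neq 0$ follows because the defect $\widehat{Q}_t(n)+An+C(t)$ cannot equal $0$ without violating one of the sandwiching inequalities (4.2)/(4.3) for a neighbouring value. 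Assembling these into a clean uniform argument, rather than a case-by-case one, is the part that will require the most care.
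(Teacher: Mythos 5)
There is a genuine gap, and it sits exactly where you flagged the difficulty. The sandwich inequalities (4.2)--(4.6) extracted from the proof of Proposition \ref{np} only bound $A+C(t)$, $A-C(t)$ and hence $A$ from \emph{below}; they never give a lower bound on $C(t)$ alone, because there is no upper bound on $A$ available from that data. Consequently, for $n=0$ (i.e.\ $6\leq m<r_X$, which does occur since $r_X$ can be much larger than $6$) the method returns nothing beyond the trivial $P_{-m}=\hat{Q}_m(0)+C(m)\geq 0$. Your three proposed repairs do not close this: (i) the ``dwarfing'' estimate fails whenever the leading term $\frac{1}{12}m(m+1)(2m+1)(-K_X)^3$ is itself less than $1$, which happens for $m=6$ since $-K_X^3$ can be as small as $\frac{1}{330}$ (and in the paper's own Case~1 basket one has $-K_X^3=\frac{1}{42}$, leading term $\approx 1.08$ at $m=6$, so uncontrolled corrections can swamp it); (ii) reduces to the very statement to be proved; (iii) integrality plus $P_{-m}\geq 0$ does not exclude $P_{-m}=0$, and the claim that vanishing ``would violate a sandwiching inequality at a neighbouring value'' is unfounded --- actual weak $\bQ$-Fano 3-folds satisfy all of (4.2)--(4.6) while having $P_{-1}=P_{-3}=P_{-5}=0$, so nothing in that coarse polynomial structure distinguishes $m\geq 6$ from $m\leq 5$, whereas the corollary is false for $m\leq 5$. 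Any correct proof must therefore control the correction terms exactly, i.e.\ the local contributions of the singularities, not merely bound them by multiples of $-K_X^3$.

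This is precisely what the paper's proof does, and it is a genuinely different mechanism from Proposition \ref{np}: it invokes Reid's singular Riemann--Roch via the virtual basket $B(X)$ \cite{YPG} and the formal-basket machinery of \cite{C-C} (the quantities $\epsilon_5$, $\sigma_5$, the inequalities (2.1)--(2.3) there, and prime packings). In the case $P_{-2}>0$ with $P_{-5}=0$ the packing analysis pins the basket down to $\{9\times(1,2),(1,3),(1,7)\}$ with $-K_X^3=\frac{1}{42}$, where $P_{-6},\dots,P_{-12}$ are computed explicitly; in the case $P_{-2}=0$ one runs through the $23$ baskets classified in \cite[Theorem 3.5, Table A]{C-C} and checks $P_{-m}>0$ for $6\leq m\leq 12$, whence for all $m\geq 6$ by adding effective divisors. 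If you want to salvage your approach, you would have to import Reid's explicit formula for the basket contributions (so that $C(t)$ becomes computable) together with the classification results of \cite{C-C}; the resolution-level Riemann--Roch sandwich alone cannot yield the strict positivity.
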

\begin{proof}  Denote by $B:=B(X)$, the virtual basket of singularities of Reid \cite{YPG}.
We shall study the formal basket ${\bf B}:=\{B, P_{-1}\}$ which was
defined in \cite[Section 2]{C-C}. Then we know, from
\cite[2.3]{C-C}, that $-K^3({\bf B})=-K_X^3>0$ and $P_{-m}({\bf
B})=P_{-m}(X)$ for all $m\geq 1$.
\medskip

{\bf Case 1}. Assume $P_{-2}>0$.

If $P_{-5}>0$, then the statement is naturally true. So we only need
to study the situation $P_{-5}=0$. It then follows that
$P_{-1}=P_{-3}=0$.

Now $\epsilon_5\geq 0$ gives $2+P_{-2}\geq 2P_{-4}+\sigma_5$. Since,
by inequality \cite[(2.3)]{C-C}, $P_{-4}\geq 2P_{-2}-1$, we see
$P_{-2}=P_{-4}=1$ and $\sigma_5\leq 1$.

First, consider the case $\sigma_5=0$. Then $\epsilon_6=1\neq 0$, a
contradiction. Thus we must have $\sigma_5=1$. Then we have
$\epsilon_5=0$ and
$$B^{(0)}=B^{(5)}=\{9\times (1,2), (1,3), (1,r)\}, \ r\geq 5.$$
Clearly, since $B^{(5)}$ admits no further prime packings, we have
$B=B^{(5)}=B^{(0)}$. Now inequality \cite[(2.1)]{C-C} implies $r\geq
7$, while inequality \cite[(2.2)]{C-C} gives $r\leq 7$. We see
$r=7$. Now our calculation shows: $-K^3=\frac{1}{42}$,
$P_{-6}=P_{-7}=2$, $P_{-8}=P_{-9}=4$, $P_{-10}=P_{-11}=6$ and
$P_{-12}=9$. Clearly we can see $P_{-m}>0$ for all $m\geq 6$.
\medskip

{\bf Case 2}. Assume $P_{-2}=0$.

According to our complete classification in \cite[Theorem 3.5, Table
A]{C-C}, there are exactly 23 cases as listed there. By the direct
calculation, we know $P_{-m}>0$ for all $6\leq m\leq 12$. Thus it is
easy to deduce $P_{-m}>0$ for all $m\geq 6$. We are done.
\end{proof}

\end{document}